\documentclass{article}
\usepackage[english]{babel}
\usepackage[latin2]{inputenc}
\usepackage[includehead,includefoot,paper=a4paper,bindingoffset=1cm,top=2.5cm,bottom=2.5cm,left=2.5cm,right=2.5cm]{geometry}

\usepackage{graphicx}
\usepackage{amssymb,amsmath,amsthm,enumerate,latexsym}
\usepackage{macros}
\title{The chain relation in sofic subshifts}
\author{Alexandr Kazda\thanks{The author would like to thank his mentor prof.
Petr Kůrka for support and valuable advice.}}
\date{}

\setcounter{page}{1}
\begin{document}
%================================SECTION============================
\maketitle

\begin{abstract}
The paper gives a characterisation of the chain relation of a sofic subshift.
Every sofic subshift $\Sigma$ can be described by a labelled graph $G$.
Factorising $G$ in a suitable way we obtain the graph $G/_\approx$ that offers
insight into some properties of the original subshift. Using $G/_\approx$ we
describe first the chain relation in $\Sigma$, then characterise
chain-transitive sofic subshifts, chain-mixing sofic subshifts and finally the
attractors of the subshift dynamic system. At the end we present
(straightforward) algorithms deciding chain-transitivity and chain-mixing
properties of a sofic subshift and listing all the attractors of the subshift
system.

{\bf Keywords:} Sofic subshift, chain relation, chain-transitivity, chain-mixing, attractor.
\end{abstract}

\section{Introduction}
The properties of subshifts play a key role in symbolic
dynamics and in the study of various dynamical systems (for example, cellular
automata). The chain relation is an important characteristic of subshifts as
well as of dynamical systems in general.

This paper addresses the question how, given a sofic subshift $\Sigma$
(described by a labelled graph), one can deduce certain properties of this
subshift: what does its chain relation look like, whether the subshift is
chain-transitive or chain-mixing
and what are the attractors of the system $(\Sigma,\sigma)$. While the answer
to this question was previously known in the special case of subshifts of
finite type (a subshift $\Sigma$ is of finite type when $x\in\Sigma$ iff $x$
does not contain any factor from a given finite set $F$ of forbidden words.), it could not be directly
applied to the more general sofic subshifts.

The core idea of this paper is to factorise the graph $G$ describing the
sofic subshift $\Sigma$. By identifying the pairs of vertices whose 
restricted follower sets have infinite intersections, we obtain the
\emph{linking graph of $G$}. We can then deduce certain properties of $\Sigma$
from this linking graph. We shall formulate an explicit description of the chain relation
(Theorem \ref{thm:chain_characterise}), obtaining the characterisation of
chain-transitive sofic subshifts as a corollary (Corollary \ref{thm:ch-tr}).
Then we focus on the attractors of $(\Sigma,\sigma)$, characterising them
(Theorem \ref{thm:chain-invariant}), and use the description of
the chain-transitive property for the explicit description of the chain-mixing
property (Theorem \ref{thm:ch-mix}). Finally, we show that
we can check all these properties algorithmically.

The properties discussed have several applications. The chain relation tells us whether we
can get from $x\in\Sigma$ to $y\in\Sigma$ in a certain way. Several
theorems connect this relation and the attractors of $(\Sigma,\sigma)$.

The chain-mixing property can be used in the study of cellular automata. In
\cite{kurka}, it is shown that each subshift attractor (an attractor that is
also a subshift) of a one-dimensional
cellular automaton must be chain-mixing. Using the algorithms from this paper, it is now
possible to conclude that many sofic subshifts can never be attractors of
cellular automata because they are not chain-mixing. Unfortunately, being chain-mixing is
not a sufficient condition to be an attractor (see examples in \cite{kurka}).

%==============================SECTION==============================
\section{Preliminaries}
In this section we shall define several key terms from symbolic dynamics.

Let $A$ be a finite alphabet. Then $A^*$ resp. $A^{\zet}$ is the set of all finite 
resp. (two-sided) infinite words consisting of letters from $A$. Given a
(finite or infinite) word $x$ denote by $x_i\in A$ the $i$-th letter of $x$.
Note that the set $A^*$ contains the empty word $\lambda$.

We can write nonempty finite words as $u=u_1u_2\cdots u_n$. Denote the \emph{length} of
$u$ as $|u|$ and define the
\emph{concatenation} operation as $uv=u_1\cdots u_nv_1\cdots v_k$ where $n$ is
the length of $u$ and $k$ the length of $v$. We adopt the notation $u_{[k,l]}$ 
for the word $u_ku_{k+1}\cdots u_{l}$.

Let $u,v$ be words ($v$ may be finite or (one- or two-sided)
infinite, $u$ is finite) in alphabet $A$. We say that $u$ is a \emph{factor} of $v$ and write
$u\factor v$ iff there exist $k, l\in\zet$ such that
$u=v_{[k,l]}$ or if $u=\lambda$. 

The \emph{shift map} is the mapping $\sigma:A^\zet\to
A^\zet$ given by $\sigma(x)_i=x_{i+1}$.

The set $\cantor$ can be understood as a metric space: For $x\neq y\in \cantor$
let $\rho(x,y)=2^{-n}$, where $n$ is the minimal nonnegative integer such that 
$x_n\neq y_n$ or $x_{-n}\neq y_{-n}$. If $x=y$, set $\rho(x,y)=0$. Then
$\rho$ is a metric on $\cantor$. The space $\cantor$ equipped
with the topology induced by $\rho$ is sometimes called the \emph{Cantor space}.
Topologically, $\cantor$ is a product of compact discrete spaces $A$ and it is thus 
compact.

The set $\Sigma\subseteq\cantor$ is called a \emph{subshift} iff it is
topologically closed and strongly
invariant under $\sigma$, that is, $\sigma(\Sigma)=\Sigma$.

Notice that a subshift is a closed subspace of the compact space
$\cantor$ and is thus itself compact.

A \emph{dynamical system} $(X,F)$ is a pair consisting of a compact metric
space $X$ and a continuous map $F:X\to X$. An example of a dynamical system is the pair
$(\Sigma,\sigma)$ where $\Sigma\subseteq \cantor$ is a subshift and $\sigma$ is
the shift map.

Let $\Sigma$ be a subshift. The \emph{language of $\Sigma$}, denoted by
$\el(\Sigma)$, is the set of all factors of words from $\Sigma$:
$\el(\Sigma)=\{v|\exists x\in\Sigma, v\factor x\}.$

\begin{remark}
The language $\el(\Sigma)$ is \emph{central} (some authors
use the term \emph{extendable}), that is:
\begin{itemize}
\item For every $w\factor v\in \el(\Sigma)$ we have $w\in\el(\Sigma)$.
\item For every $v\in\el(\Sigma)$ there exist nonempty words $u,w$ such that
$uvw\in\el(\Sigma)$.
\end{itemize}
\end{remark}

We say that a language is \emph{regular} if it can be recognised by some finite
automaton (a machine with finite memory). A subshift whose language is regular
is called \emph{sofic}. 

A \emph{labelled graph} $G$ over alphabet $A$ is an oriented multidigraph (a
graph where we allow multiple parallel edges and loops) whose
edges are labelled with the letters from alphabet $A$. More precisely, the
labelled graph is a quintuple $(V(G),E(G),s,t,l)$ where $V(G)$ is the set of
vertices, $E(G)$ the set of edges and the mappings $s,t:E(G)\to V(G)$ and
$l:E(G)\to A$ assign to each
edge $e$ its starting and ending vertex and its label, respectively. Both
$V(G)$ and $E(G)$ must be finite.

A \emph{subgraph} $H$ of a labelled graph $G=(V(G),E(G),s,t,l)$ (over alphabet $A$) is a labelled
graph $(V(H),E(H),s_H,t_H,l_H)$  such that $V(H)\subseteq V(G), E(H) \subseteq
E(G)$ and the mappings $s_H,t_H$ and $l_H$ are restrictions of $s,t$ and $l$,
respectively. The \emph{subgraph of $G$ induced by the set $U$} is the subgraph
$H$ of $G$ such that $V(H)=U$ and the set $E(H)$ contains all the edges
$e \in E(G)$ such that $s(e),t(e)\in V(H)$.

We call a subgraph $H$ of $G$ \emph{terminal} if there is no edge $e\in E(G)$ starting
in $v\in V(H)$ and ending in $u\in V(G)\setminus V(H)$. Similarly, we define
\emph{initial subgraph} $H$ of $G$ as a subgraph such that no edge $e\in E(G)$
starts in $v\in V(G)\setminus V(H)$ and ends in $u \in V(H)$.

A \emph{walk} of length $l$ in a graph $G$ is any sequence
$v_0e_1v_1\dots e_lv_l$ where $v_i\in V(G), e_i\in E(G)$ are vertices and edges
of $G$  and the edge $e_{i+1}$ leads from $v_i$ to
$v_{i+1}$ in $G$ for each $i=0,1\dots,l-1$. We allow walks of length
zero (a single vertex). We also define biinfinite walks as sequences $\dots
v_{-2}e_{-1}v_{-1}e_{0}v_0e_1v_1e_2v_2 \dots$ such that the edge
$e_{i+1}$ leads from $v_i$ to $v_{i+1}$ in $G$ for each $i\in\zet$.

A walk of length $0<l<\infty$ is \emph{closed} if $v_0=v_l$.

As any walk of nonzero length is uniquely determined by its sequence
of edges, we shall often use a shorthand description of walks, writing down
just the edges.

A labelled graph $G$ is called \emph{connected} iff for all $u,v \in V(G)$
there exists a walk from $u$ to $v$. (To be precise, this is the definition of
a \emph{strongly connected} graph. However, we do not consider weak
connectivity in the article and so we can safely omit this adjective.) We allow
walks of length zero, so a single vertex graph is considered connected in this
paper.

A labelled graph $G$ is \emph{periodic} iff $V(G)$ can be partitioned into
$n>1$ disjoint
sets of vertices $V_0,V_1,\dots,V_{n-1}$ such that every edge $e\in E(G)$ leads
from some $v\in V_k$ to some $u\in V_{k+1}$ (here $V_n=V_0$) for a suitable $k$.
A graph is \emph{aperiodic} iff it is not periodic.

Given a graph $G$, we can partition $G$ into its maximal connected subgraphs
$K_1,K_2,\dots,K_n$. These graphs are obviously disjoint and for every vertex
$v\in G$ there exists $i$ such that $v\in V(K_i)$ (we
allow single-vertex components). Call $K_1,\dots,K_n$ the \emph{components of $G$}.

The \emph{subshift of a labelled graph $G$} is the set $\Sigma(G)$ of all
$x\in\cantor$ such that in $G$ there exists a biinfinite walk $\{e_i\}_{i\in\zet}$
such that $l(e_i)=x_i$. It is easy to verify that $\Sigma(G)$ is indeed a
subshift of $\cantor$. Call the language of $\Sigma(G)$ \emph{the language of
the graph $G$} and denote it by $\el(G)$.

A vertex $v$ of a graph $G$ is called \emph{stranded} iff it does not have at least one
outgoing and at least one ingoing edge (a loop counts as both).

A graph $G$ is called \emph{essential} iff it does not contain stranded
vertices.

It is easy to see that for an essential $G$ a (finite) word
$u=u_1u_2\dots u_k$ belongs
to the language $\el(G)$ of $G$ iff there exists a walk  $e_1e_2\dots
e_k$ in $G$ such that the edge $e_i$ is labelled by the letter
$u_i$. Call such a walk a \emph{presentation} of $v$ in $G$. Because we
allow empty walks, every $\el(G)$ also contains the empty word $\lambda$.
It can be shown that forgetting all 
stranded vertices of $G$ does not change the language $\el(G)$, so we can safely limit
ourselves to essential graphs. See \cite[p. 37]{intro} for details.
\begin{proposition}\cite[p. 133]{symbdyn}
The central language $L$ is a language of a sofic subshift iff it can be obtained as
the language of some labelled graph.
\end{proposition}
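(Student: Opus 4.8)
The statement is a biconditional linking a combinatorial object (a labelled graph) with a language-theoretic property (regularity, hence soficity), so the plan is to prove the two implications separately. The implication ``graph $\Rightarrow$ sofic'' is routine, while the converse carries the real content and requires an explicit construction of a graph from a regular language.

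For the easy direction, suppose $L=\el(G)$ for some labelled graph $G$. Then $L=\el(\Sigma(G))$ is the language of the subshift $\Sigma(G)$, so $L$ is automatically central by the Remark. To see that $\Sigma(G)$ is sofic I would read $G$ as a nondeterministic finite automaton over $A$ in which every vertex is simultaneously initial and accepting, each edge $e$ being a transition from $s(e)$ to $t(e)$ reading the letter $l(e)$. The words accepted by this automaton are exactly the labels of finite walks in $G$, and since any NFA recognises a regular language, $\el(G)$ is regular and $\Sigma(G)$ is sofic.

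For the converse, assume $L$ is central with $L=\el(\Sigma)$ for a sofic $\Sigma$, so $L$ is regular. I would use the follower-set construction. For $u\in L$ put $F(u)=\{w : uw\in L\}$; regularity of $L$ together with the Myhill--Nerode theorem guarantees that the family of distinct sets $F(u)$ is finite, and I take these as the vertices of a graph $G_0$. Since $L$ is central and nonempty, $\lambda\in L$ and $F(\lambda)=L$ is a vertex. For each vertex $F(u)$ and each letter $a$ with $a\in F(u)$ (equivalently $ua\in L$) I draw an edge labelled $a$ from $F(u)$ to $F(ua)$; this is well defined because $F(u)=F(u')$ forces $F(ua)=F(u'a)$, so the target depends only on the class and the letter. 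The claim is then $\el(G_0)=L$. The inclusion $\el(G_0)\subseteq L$ is direct: any finite walk starting at $F(u)$ and labelled $v$ forces $uv_{[1,i]}\in L$ for every $i$ edge by edge, hence $uv\in L$ and $v\in L$ by factor-closedness, and every factor of a biinfinite walk label is such a finite-walk label.

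The inclusion $L\subseteq\el(G_0)$ is the point I expect to be the main obstacle, precisely because $\el$ is defined through \emph{biinfinite} walks whereas the construction naturally produces finite walks, and a labelled graph carries no distinguished initial or terminal vertices. Given $w\in L=\el(\Sigma)$, write $w\factor x$ for some $x\in\Sigma$; I would show $x\in\Sigma(G_0)$, which yields $w\in\el(G_0)$. For each $n$, reading $x_{[-n,n]}$ (which lies in $L$) from $F(\lambda)$ gives a finite walk whose vertex after position $i$ is $F(x_{[-n,i]})$. As $V(G_0)$ is finite, a diagonal argument extracts a subsequence of $n$ along which these vertices stabilise, for every $i\in\zet$, to some $q_i$; the labelled edges survive the limit (only finitely many choices occur), producing a biinfinite walk labelled $x$, so $x\in\Sigma(G_0)$. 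This compactness step is the delicate part; once it is in place we have $\el(G_0)=L$, and one may, if desired, pass to the essential subgraph, which by the remark in the text leaves $\el(G_0)$ unchanged, to obtain the graph $G$ presenting $L$.
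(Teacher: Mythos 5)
Your proof is correct. Note that the paper does not prove this proposition at all --- it is quoted with a citation to the literature --- and your argument (the follower-set construction for the hard direction, reading $G$ as an NFA with all states initial and accepting for the easy one, plus a compactness/K\"onig extraction of a biinfinite walk) is essentially the standard proof found in the cited sources, so there is nothing to compare against within the paper itself. One small point to tighten: in the easy direction the NFA accepts exactly the labels of \emph{finite} walks, which coincide with $\el(G)$ only when $G$ is essential; you should invoke the paper's remark that deleting stranded vertices does not change $\el(G)$ already at that step (you only invoke essentiality at the very end, for the converse), and you should dispose of the trivial case $L=\emptyset$ before asserting $\lambda\in L$.
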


For the sake of providing context we shall now briefly discuss transitivity and
the mixing property.

We say that the subshift $\Sigma$ is \emph{transitive}, iff for every $\epsilon>0$ and
every $u,v\in \Sigma$ there exists $k$ and $w\in \Sigma$ such that
$\rho(u,w)<\epsilon$ and $\rho(v,\sigma^k(w))<\epsilon$. We say that $\Sigma$
is \emph{mixing} if for every $u,v\in \Sigma$ and every $\epsilon>0$ there
exists $n$ such that for every $k>n$ there exists $w$ such that
$\rho(u,w)<\epsilon$ and $\rho(v,\sigma^k(w))<\epsilon$.

It can be shown (see \cite[p. 80--82, 127--129]{intro}, note that the authors
of \cite{intro} use the term ``irreducible'' for what we call ``transitive'')
that a sofic subshift $\Sigma$ is transitive iff there exists a connected labelled graph 
whose subshift is $\Sigma$ and it is mixing iff there exists a connected
aperiodic labelled graph whose subshift is $\Sigma$.

The chain-transitivity and chain-mixing properties are weaker properties than
transitivity and mixing, respectively.

\emph{An $\epsilon$-chain} (in $\Sigma$) of length $n>0$ from $x^0$ to $x^n$ is a sequence of words
$x^0,x^1,\dots,x^n\in \Sigma$ such that
$\rho(\sigma(x^i),x^{i+1})<\epsilon$ for all $i=1,2,\dots,n-1$. 

Let $\Sigma$ be a subshift. We say that $\Sigma$ is \emph{chain-transitive} iff for
every words $x,y\in\Sigma$ and every $\epsilon>0$ there exists an
$\epsilon$-chain in $\Sigma$ from $x$ to $y$. We
say that $\Sigma$ is \emph{chain-mixing} if for every $x,y\in\Sigma$ and
$\epsilon>0$ there exists $k\in\en$ such that for every $n>k$ there exists an
$\epsilon$-chain in $\Sigma$ of length $n$ from $x$ to $y$.

Let $\Sigma$ be a subshift, $x,y\in\Sigma$. We say that $x$ and $y$ (in this
order) are in the \emph{chain relation}, writing $(x,y)\in\chain$ if for every
$\epsilon>0$ there exists an $\epsilon$-chain from $x$ to $y$.

Obviously, $\Sigma$ is chain-transitive iff $\chain=\Sigma\times\Sigma$.

As we have a one-to-one correspondence between subshifts and central languages,
we may call a language chain-transitive or chain-mixing meaning that its
subshift has this property. We characterise these properties in terms of
the language $\el(\Sigma)$. 

\begin{definition}
Let $L$ be a language, $u,v\in L, |u|=|v|=m$. The \emph{chain of
length $n$ from $u$ to $v$
in $L$} is a word $w, |w|=n+m$ with prefix $u$ and suffix
$v$ (more precisely, $w_{[1,m]}=u$ and $w_{[|w|-m+1,|w|]}=v$) such that
if $z\factor w, |z|\leq m$ then $z\in L$.
\end{definition}
Note that while $\epsilon$-chains can not have zero length, we
do allow chains of length zero.
\begin{proposition}\label{thm:chain}
The pair $(x,y)$ is in $\chain$ iff for every $l\in\en$ there exists 
a chain of nonzero length from $x_{[-l,l]}$ to $y_{[-l,l]}$.
\end{proposition}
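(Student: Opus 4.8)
The plan is to translate the metric condition into agreement of central windows and then pass between $\epsilon$-chains of biinfinite sequences and chains of finite words by a \emph{diagonal} reading. Throughout I fix $m=2l+1$, so that $x_{[-l,l]}$ and $y_{[-l,l]}$ have length $m$. The basic dictionary is that $\rho(a,b)<2^{-p}$ holds iff $a$ and $b$ agree on all coordinates in $[-p,p]$; in particular $\rho(a,b)<2^{-l}$ is equivalent to $a_{[-l,l]}=b_{[-l,l]}$. Since $\el(\Sigma)$ is factor-closed, a word $w$ with prefix $x_{[-l,l]}$ and suffix $y_{[-l,l]}$ is a chain from $x_{[-l,l]}$ to $y_{[-l,l]}$ exactly when every length-$m$ factor of $w$ lies in $\el(\Sigma)$.

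For the direction ($\Rightarrow$) I fix $l$ and choose $\epsilon$ so small that each defining inequality $\rho(\sigma(x^i),x^{i+1})<\epsilon$ forces $\sigma(x^i)$ and $x^{i+1}$ to agree on the window of radius $3l$; concretely $\epsilon=2^{-3l}$ works. Using $(x,y)\in\chain$ I take an $\epsilon$-chain $x=x^0,x^1,\dots,x^n=y$ (so $n\ge 1$) and define a word $w$ of length $m+n$ by reading $x^0$ across its centre and then appending, one at a time, the letter at position $l$ of each successive sequence:
\[
 w_k=\begin{cases} x^0_{-l+k-1}, & 1\le k\le m,\\ x^{\,k-m}_{\,l}, & m<k\le m+n. \end{cases}
\]
Then $w_{[1,m]}=x^0_{[-l,l]}=x_{[-l,l]}$, and the shift-matchings telescope to give $w_{[n+1,n+m]}=x^n_{[-l,l]}=y_{[-l,l]}$. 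The heart of the argument is that every length-$m$ window of $w$ is a genuine factor of a single $x^i$: such a window meets at most $m$ consecutive sequences, and repeatedly applying the identity $x^{i+1}_j=x^i_{j+1}$ (valid on the radius-$3l$ agreement region) rewrites all of its letters as consecutive coordinates of the lowest-indexed sequence it touches, i.e.\ as a length-$m$ factor of that $x^i\in\Sigma$, hence an element of $\el(\Sigma)$. Thus $w$ is the desired chain, of nonzero length $n$.

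For ($\Leftarrow$) I fix $\epsilon>0$ and choose $l$ with $2^{-l}<\epsilon$; I take a chain $w$ of nonzero length $n$ from $x_{[-l,l]}$ to $y_{[-l,l]}$, so $|w|=m+n$ and each length-$m$ window $w_{[i+1,i+m]}$ lies in $\el(\Sigma)$. For each interior $i$ I pick $z^i\in\Sigma$ having this window as a factor and shift it so that the window sits at the centre, obtaining $x^i\in\Sigma$ with $x^i_{[-l,l]}=w_{[i+1,i+m]}$; here I use $\sigma(\Sigma)=\Sigma$. I set $x^0=x$ and $x^n=y$, which is legitimate because $x_{[-l,l]}$ and $y_{[-l,l]}$ are precisely the first and last windows. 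A short index computation shows $\sigma(x^i)$ and $x^{i+1}$ agree on $[-l,l-1]$ for every $i$, whence $\rho(\sigma(x^i),x^{i+1})\le 2^{-l}<\epsilon$, so $x^0,\dots,x^n$ is an $\epsilon$-chain from $x$ to $y$.

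The step I expect to be the real obstacle is the factor claim in ($\Rightarrow$): one must see that the transitions between consecutive sequences do not spoil membership in $\el(\Sigma)$ and, crucially, that this can be secured by a \emph{single} $\epsilon$ depending only on $l$ and not on the a priori unbounded chain length $n$. This is exactly what the bound ``radius $3l$'' achieves, since any length-$m$ window meets only boundedly many of the $x^i$ and the telescoping of the matchings never leaves the radius-$3l$ agreement region. The remaining verifications (the telescoping for the suffix and the agreement on $[-l,l-1]$ in the converse) are routine index bookkeeping.
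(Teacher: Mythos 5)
Your proof is correct and follows essentially the same route as the paper: the same construction of the chain word (the window $x_{[-l,l]}$ followed by one appended letter per step of the $\epsilon$-chain) in the forward direction, and the same extension of each length-$m$ window of the chain to a point of $\Sigma$ in the converse. The only difference is bookkeeping: you take the coarser $\epsilon=2^{-3l}$ and telescope each window back to the lowest-indexed sequence it meets, whereas the paper's induction shows each window is exactly the central block $z^i_{[-l,l]}$ and so gets by with $\epsilon=2^{-l-1}$.
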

\begin{proof}
Let first $(x,y)$ lie in $\chain$, and let $l\in \en$. Consider
$u=x_{[-l,l]},v=y_{[-l,l]}, \epsilon=2^{-l-1}$. There exists an $\epsilon$-chain
$x=z^0,z^1,\dots,z^n=y$. Now consider the letters $a_i=z^i_{l+1}$.
As
$\rho(\sigma(z^i),z^{i+1})<2^{-l-1}$, it must be
$z^{i+1}_{[-l-1,l+1]}=z^i_{[-l,l+2]}$.

\obrazek{The correspondence between $\epsilon$-chains and chains.}{epsilon_chain.eps}{fig:chain}

Let $w=ua_0a_1\dots a_{n-1}$. Then the word $w_{[i+1,i+2l+1]}$ 
is equal to $z^i_{[-l,l]}$ (see Figure
\ref{fig:chain}) for every $i= 0,1,\dots,n$ and so all the factors of $w$ of length $2l+1$ belong to
$L$. Because $v=z^n_{[-l,l]}$, the word $ua_0a_1\dots
a_{n-1}$ is a
chain from $u$ to $v$. Notice that the length $n>0$ of the original
$\epsilon$-chain from $x$ to $y$ is equal to the length of the
constructed chain from $u$ to $v$.

On the other hand, let without loss of generality $\epsilon=2^{-l+1}$ for some
$l$ and assume that there exists a chain of nonzero length from $u=x_{[-l,l]}$
to $v=y_{[-l,l]}$. We shall prove that there exists an $\epsilon$-chain from
$x$ to $y$.

Let $w$ be the chain from $u$ to $v$ of length $n>0$. Set $z^0=x, z^n=y$. Denote by
$z^i, i=1,2,\dots,n-1$, an arbitrary infinite extension of the word $w_{[i+1,i+2l+1]}$ in
$\Sigma$ (such an extension exists because $w_{[i,i+2l]}\in L$, $L$ is central
and $\Sigma$ is compact) with
$w_{[i+1,i+2l+1]}=z^i_{[-l,l]}$. Note that $x$ and $y$ are extensions of $u$ and
$v$ respectively. We want to check that $x=z^0,z^1,\dots, z^n=y$ is an
$2^{-l+1}$-chain: $z^i_{[-l+2,l]}=w_{[i+3,i+2l+1]}=z^{i+1}_{[-l+1,l-1]}$ and thus
$\rho(\sigma(z^i),z^{i+1})<2^{-l+1}$. Again note that the length of the
produced $\epsilon$-chain is equal to the length of the chain $w$.
\end{proof}

\begin{corollary}\label{thm:obvious}
Let $L=\el(\Sigma)$.
%$ $\newline
\begin{enumerate}
\item  The subshift $\Sigma$ is chain-transitive iff for
every
$u,v\in L, |u|=|v|=m$ there exists a chain of nonzero length from $u$ to $v$

\item The subshift $\Sigma$ is
chain-mixing iff for every $u,v\in L,|u|=|v|=m$ exists $k\in\en$ such that for
all $n>k$ there exists a chain of length $n$ from $u$ to $v$.
\end{enumerate}
\end{corollary}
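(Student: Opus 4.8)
The plan is to read off both parts from Proposition~\ref{thm:chain}, using the fact that the two constructions in its proof preserve the length of the chains exactly. Recall that $\Sigma$ is chain-transitive precisely when $\chain=\Sigma\times\Sigma$, i.e.\ when $(x,y)\in\chain$ for all $x,y\in\Sigma$; by Proposition~\ref{thm:chain} this amounts to saying that for all $x,y\in\Sigma$ and all $l\in\en$ there is a chain of nonzero length from $x_{[-l,l]}$ to $y_{[-l,l]}$. The only gap between this and part~(1) is that Proposition~\ref{thm:chain} speaks of the symmetric windows $x_{[-l,l]}$, whose length $2l+1$ is always odd, whereas the corollary quantifies over arbitrary $u,v\in L$ of any common length $m$. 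Closing this parity gap is the single point that needs care; the rest is bookkeeping.

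For part~(1) I would first dispatch the easy implication: given the chain condition and any $x,y\in\Sigma$, $l\in\en$, the words $u=x_{[-l,l]}$ and $v=y_{[-l,l]}$ lie in $L$ and have equal length, so there is a chain of nonzero length between them, and Proposition~\ref{thm:chain} yields $(x,y)\in\chain$; hence $\chain=\Sigma\times\Sigma$. For the converse, assume $\Sigma$ is chain-transitive and fix $u,v\in L$ with $|u|=|v|=m$. If $m=2l+1$ is odd, I extend $u,v$ to biinfinite words $x,y\in\Sigma$ with $x_{[-l,l]}=u$ and $y_{[-l,l]}=v$ (possible since $L$ is central and $\Sigma$ is compact); as $(x,y)\in\chain$, Proposition~\ref{thm:chain} produces a chain of nonzero length from $u$ to $v$. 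If $m$ is even, I pad: by centrality pick letters so that $u'=ua\in L$ and $v'=bv\in L$ have odd length $m+1$, apply the odd case to obtain a chain from $u'$ to $v'$, and observe that the same word is already a chain from $u$ to $v$, since its length-$m$ prefix is $u$, its length-$m$ suffix is $v$, and every factor of length $\le m$ is a fortiori a factor of length $\le m+1$ and so lies in $L$.

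Part~(2) runs in parallel, the extra ingredient being the observation, made explicitly in the proof of Proposition~\ref{thm:chain}, that each construction sends an $\epsilon$-chain of length $n$ to a chain of exactly length $n$ and vice versa. From chain-mixing to the chain condition: for $u,v\in L$ of odd length $m=2l+1$ I extend to $x,y\in\Sigma$ as above and apply chain-mixing with $\epsilon=2^{-l-1}$, getting a $k$ such that for every $n>k$ there is an $\epsilon$-chain of length $n$ from $x$ to $y$; the length-preserving forward construction converts each of these into a chain of length $n$ from $u$ to $v$. Even $m$ is again handled by padding, which merely shifts chain lengths by one and so replaces $k$ by $k+1$. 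Conversely, given $x,y\in\Sigma$ and $\epsilon>0$, I shrink $\epsilon$ to the form $2^{-l+1}$, put $u=x_{[-l,l]}$, $v=y_{[-l,l]}$ (automatically of odd length, so no padding is needed), use the hypothesis to find $k$ with chains of every length $n>k$, and convert each via the length-preserving backward construction into an $\epsilon$-chain of length $n$ from $x$ to $y$; this is precisely chain-mixing.

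The one genuine obstacle I anticipate is exactly this parity mismatch between the symmetric windows of Proposition~\ref{thm:chain} and the arbitrary-length words of the corollary. The centrality-based one-letter padding resolves it, but I would take care to pad $u$ on the right and $v$ on the left so that the prefix/suffix structure of a chain is preserved, and to note that the padding changes chain lengths by at most one---harmless both for the ``nonzero length'' clause of part~(1) and for the ``all $n>k$'' clause of part~(2).
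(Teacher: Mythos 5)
Your proof is correct and takes essentially the same route as the paper, whose own two-line proof simply cites Proposition \ref{thm:chain} together with the observation that its two constructions preserve chain length exactly. The only substantive addition is your one-letter padding argument for even-length $u,v$ (the parity mismatch with the odd-length windows $x_{[-l,l]}$), a detail the paper silently glosses over; you handle it correctly.
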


\begin{proof}
The first claim directly follows from Proposition \ref{thm:chain}. To prove the
second claim it is sufficient to notice that the length of $\epsilon$-chain and of the
corresponding chain construed in the proof of Proposition \ref{thm:chain} is the same. 
\end{proof}

%================================SECTION============================
\section{The Linking Graph}
The purpose of this section is to define the linking graph which provides
useful tool for the study of the chain relation.

\begin{definition}
Let $G$ be a labelled graph and $v$ be a vertex in $G$. The \emph{follower set} of $v$ in $G$
is the language $F_G(v)$ of all finite words that have presentations in $G$
that start at vertex $v$.
\end{definition}
If $v$ is a vertex of $G$ belonging to a component $K$, we shall call the set
$F_{K}(v)$ the \emph{restricted follower set} and denote it by $F(v)$.

\begin{definition}
Let $G$ be a graph and $v,w\in V(G)$. We say that the vertices $v,w$ are \emph{linked} iff
$|F(v)\cap F(w)|=\infty$ (note that we consider restricted follower sets here) and write $v\sim w$.
Denote by $\approx$ the transitive and reflexive closure of $\sim$.
\end{definition}

\begin{lemma}
The vertices $v$ and $w$ are linked iff for any positive integer
$n$ there exists a word $z\in F(v)\cap F(w), |z|=n$.
\end{lemma}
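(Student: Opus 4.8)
The plan is to prove the two implications separately, with only the forward direction requiring a genuine argument.

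First I would dispose of the easy direction. If for every $n\in\en$ there is a word of length $n$ in $F(v)\cap F(w)$, then this set contains words of arbitrarily large length and is therefore infinite; by definition this means $|F(v)\cap F(w)|=\infty$, i.e. $v\sim w$. Nothing more is needed here.

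For the converse I would first establish the key structural fact that each restricted follower set is closed under taking prefixes. Concretely, if $z\in F(v)$ is presented by a walk $e_1e_2\cdots e_k$ inside the component $K$ starting at $v$, then truncating the walk to $e_1\cdots e_j$ yields a presentation in $K$ starting at $v$ of the prefix $z_{[1,j]}$, so $z_{[1,j]}\in F(v)$. The truncated walk uses only edges of the original walk and so remains inside $K$. The same applies to $w$, and hence $F(v)\cap F(w)$ is itself prefix-closed.

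Given this, the remaining step is a pigeonhole argument over the finite alphabet $A$: since $A$ is finite, for each fixed length there are only finitely many words, so an infinite set $F(v)\cap F(w)$ must contain words of unboundedly large length. Fixing any $n\in\en$, I would choose some $z\in F(v)\cap F(w)$ with $|z|\geq n$ and take its prefix $z_{[1,n]}$, which lies in $F(v)\cap F(w)$ by prefix-closedness and has length exactly $n$. The only point that requires care, and the one I would single out as the crux, is the prefix-closedness of $F(v)$; once that is in hand the conversion from ``arbitrarily long words'' to ``words of every length'' is immediate.
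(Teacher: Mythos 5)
Your proposal is correct and follows essentially the same route as the paper: both directions rest on the prefix-closedness of the restricted follower sets (obtained by truncating a presentation) combined with the observation that finiteness of $A$ forces an infinite intersection to contain arbitrarily long words. Your explicit remark that the truncated walk stays inside the component $K$ is a small but welcome extra precision over the paper's phrasing.
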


\begin{proof}
First observe that if $yz\in F(v)$ then $y\in F(v)$ as we can simply
forget the ending of the presentation of $yz$ to obtain a presentation of $y$.

If now $|F(v)\cap F(w)|=\infty$ then $F(v)\cap F(w)$ must contain arbitrarily large words
as $A$ is finite. For a given $n$, consider $y\in F(v)\cap F(w), |y|>n$.
There exist $z_1,z_2$ such that $|z_1|=n$ and $z_1z_2=y$. Thanks
to the above remark we have that $z_1\in F(v)\cap F(w)$ and
we are done.

On the other hand, if $F(v)\cap F(w)$ contains word $z_1$ of length 1,
$z_2$ of length 2 and so on, then there is an infinite subset $\{z_1,z_2,\dots\}$
of $F(v)\cap F(w)$ and thus $|F(v)\cap F(w)|=\infty$.
\end{proof}

\begin{definition}
Let $G$ be a labelled graph. The graph $G/_\approx$, called the \emph{linking graph} of
$G$, is a graph obtained from $G$ by joining all pairs of linked vertices. More
precisely:  The set $V(G/_\approx)$ is the set
of all equivalence classes of $\approx$ on $V(G)$. An edge $e$ (labelled by the
letter $a$) goes from a vertex $x$ to a vertex $y$ in $G/_\approx$ iff there exist vertices
$u\in x$, $v\in y$ such that an edge $f$ labelled by $a$ goes from $u$ to
$v$ in $G$.
\end{definition}
\obrazek{An example of a linking graph.}{linking_graph.eps}{}

In a labelled graph, a word might have multiple presentations. Define 
the projection $\pi:V(G)\to V(G/_\approx)$ that
assigns to every $v\in V(G)$ the equivalence class of $v$ in $\approx$. We want
to show that, for $v$ long enough, the images under $\pi$ of every pair of
presentations of $v$ intersect in a nice way.

In the following, let $c$ be the number of components of $G$. 

\begin{lemma}\label{thm:pigeon} Let $t$ be a positive integer. Let the
(nonempty) graph
$G$ consist of $c$
components and let $k\geq(t+1)c^2+c$.  Let $e_1,\dots,e_k$ and $f_1,\dots,f_k$ 
be two walks in $G$. Then there exist two
components $M_e,M_f$ of $G$ and a positive integer $r\leq k-t+1$ such that $e_{i+r}\in M_e$ and
$f_{i+r}\in M_f$ for every $i\in \{0,1,2,\dots,t-1\}$. 
\obrazek{The two walks from Lemma \ref{thm:pigeon}. The thick arrows represent the
interval $r,r+1,\dots,r+t-1$.}{long_words.eps}{}
\end{lemma}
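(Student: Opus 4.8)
The plan is to exploit the key structural fact that the components of $G$ are its maximal strongly connected subgraphs, so that the sequence of components visited along any walk never revisits a component once it has been left. Writing a walk as $v_0 e_1 v_1 \cdots e_k v_k$ and letting $C(v_i)$ denote the component containing $v_i$, I would first prove: if $C(v_a) = C(v_b)$ with $a < b$, then $C(v_i) = C(v_a)$ for all $a \le i \le b$. This holds because the sub-walk $v_a \rightsquigarrow v_i \rightsquigarrow v_b$ together with a return walk $v_b \rightsquigarrow v_a$ inside the (strongly connected) component forces $v_i$ into the same component as $v_a$. Consequently the sequence $C(v_0), \dots, C(v_k)$ breaks into at most $c$ maximal constant blocks, and the number of indices $i$ with $C(v_{i-1}) \ne C(v_i)$ — equivalently, the number of \emph{crossing} edges $e_i$ whose endpoints lie in different components — is at most $c-1$.

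Applying this to both walks, I would call an index $i \in \{1,\dots,k\}$ \emph{forbidden} if $e_i$ or $f_i$ is a crossing edge; there are at most $2(c-1)$ forbidden indices. These split $\{1,\dots,k\}$ into at most $2c-1$ maximal runs of consecutive non-forbidden indices, and inside any such run neither walk changes component, so $e$ stays in a single component $M_e$ and $f$ in a single component $M_f$ throughout the run. By pigeonhole the longest run has length at least $(k-2c+2)/(2c-1)$, which is $\ge t$ under the hypothesis $k \ge (t+1)c^2 + c$ (indeed this hypothesis is far more generous than needed — a bound of order $tc$ already suffices). Taking $r$ to be the first index of a run of length $\ge t$ then yields $e_r,\dots,e_{r+t-1} \in M_e$ and $f_r,\dots,f_{r+t-1}\in M_f$, with $r \le k-t+1$ automatic since the window fits inside $\{1,\dots,k\}$.

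The main obstacle is careful bookkeeping rather than any deep idea: I must establish the no-revisiting property cleanly (the only place strong connectivity of the components is used), align the edge-versus-vertex indexing so that ``crossing edge'' and ``run of length $t$'' match the offset convention $e_{i+r}$, and check the degenerate case $c=1$ (no crossing edges, so any admissible $r$ works). I would also verify the pigeonhole inequality at small values of $c$ to confirm that the stated hypothesis comfortably guarantees a gap of length $t$.
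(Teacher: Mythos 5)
Your proposal is correct and rests on the same core idea as the paper's proof: a walk crosses between components at most $c-1$ times (because components, being maximal strongly connected subgraphs, are never revisited), after which a pigeonhole argument produces a common window of length $t$ in which both walks are confined to single components. The only difference is bookkeeping --- you apply one pigeonhole step to the union of the at most $2(c-1)$ crossing indices of both walks (which, as you observe, yields the sharper bound of order $tc$), whereas the paper nests two pigeonhole steps, first extracting a window of length $c(t+1)$ where the $e$-walk stays in one component and then a sub-window of length $t$ for the $f$-walk; your explicit proof of the no-revisiting property also fills in a step the paper leaves implicit.
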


\begin{proof}
To prove the lemma, we shall use the pigeonhole principle. Without loss of
generality, assume $k=(t+1)c^2+c$.
Since there are $c$ components, at most $c-1$ edges are spent
traversing between components. Thus there exists an interval 
$e_s,e_{s+1},\dots,e_{s+c(t+1)-1}$
of the walk $e_1,\dots,e_k$ of length $c(t+1)$ that passes through only one component
$M_e$.

Consider the walk $f_s,f_{s+1},\dots,f_{s+c(t+1)-1}$. This is a walk of
length $tc+c$ and thus, using the above argument again, there exists
an interval $f_r, f_{r+1},\dots,f_{r+t-1}$ going
through only one component $M_f$. But then $e_{r+i}\in E(M_e)$  and $f_{r+i}\in
E(M_f)$ for $i=0,1,\dots,t-1$.
\end{proof}

\begin{corollary}\label{thm:intersect}
For every (nonempty) labelled graph $G$ there exists a length $l$ such that if
$w\in\el(G)$ is of length at least $l$ and
$v_1e_1v_2e_2\dots e_{|w|}v_{|w|+1}$ and $v'_1e'_1v'_2e'_2\dots e'_{|w|}v'_{|w|+1}$
are two presentations of $w$ in $G$, then there exists $r$ such that
$\pi(v_r)=\pi(v'_r)$ (recall that $\pi$ is the projection from $G$ to
$G/_\approx$).
\end{corollary}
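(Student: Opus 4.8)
The plan is to apply Lemma~\ref{thm:pigeon} with $t$ chosen large enough that the two long intervals it produces are \emph{forced} to project to the same vertex of $G/_\approx$. The lemma already gives me, for any $t$, two components $M_e, M_f$ and a shift $r$ such that the blocks $e_{r},\dots,e_{r+t-1}$ and $f_{r},\dots,f_{r+t-1}$ stay inside $M_e$ and $M_f$ respectively. Since these two blocks are presentations of the same subword of $w$ (the word $w_{[r,r+t-1]}$ read off the labels), the starting vertices $v_r$ and $v'_r$ of these blocks are vertices of $M_e$ and $M_f$ that share a common word of length $t$ in their restricted follower sets. So the key is to make $t$ so big that sharing a length-$t$ word forces $v_r \sim v'_r$, hence $\pi(v_r)=\pi(v'_r)$.

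First I would make the "forcing" precise. Two vertices $v_r \in M_e$ and $v'_r \in M_f$ are linked exactly when $|F(v_r)\cap F(v'_r)|=\infty$. By the preceding Lemma (the one characterising linked vertices), this is equivalent to $F(v_r)\cap F(v'_r)$ containing words of every length. A single shared word of length $t$ does not by itself give words of \emph{every} length, so I cannot conclude linkage from one block alone. The fix is a pigeonhole argument on pairs of vertices: there are at most $|V(G)|^2$ pairs, so if the intersection $F(v_r)\cap F(v'_r)$ fails to be infinite, it is bounded by some finite length $N$ depending only on the pair. Taking $t$ larger than the maximum such bound $N$ over all non-linked pairs of vertices guarantees that any pair sharing a word of length $t$ must in fact be linked.

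Concretely, I would proceed as follows. Let $P$ be the (finite) set of ordered pairs $(u,u')$ of vertices of $G$ that are \emph{not} linked; for each such pair $F(u)\cap F(u')$ is finite, so it contains only words up to some maximal length, and I set $N$ to be the largest of these maximal lengths over all pairs in $P$ (if $P$ is empty any choice works). Now fix $t = N+1$ and let $l = (t+1)c^2 + c$, the threshold from Lemma~\ref{thm:pigeon}. Given any $w$ of length at least $l$ with two presentations, Lemma~\ref{thm:pigeon} supplies $M_e$, $M_f$ and $r$ with $e_{r+i}\in M_e$, $f_{r+i}\in M_f$ for $i=0,\dots,t-1$. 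Reading the labels, the word $w_{[r,r+t-1]}$ lies in $F(v_r)$ (via the block in $M_e$) and in $F(v'_r)$ (via the block in $M_f$); since these are walks \emph{inside} single components, these are \emph{restricted} follower sets, which is exactly what linkage requires. Thus $F(v_r)\cap F(v'_r)$ contains a word of length $t = N+1 > N$, so the pair cannot be in $P$, i.e. $v_r \sim v'_r$, giving $\pi(v_r)=\pi(v'_r)$ as required.

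The main obstacle I anticipate is the gap between "shares one word of length $t$" and "shares words of all lengths" (true linkage). The resolution above—bounding the finite intersections uniformly over the finitely many non-linked pairs and choosing $t$ to exceed that bound—is the crux, and I would double-check one subtlety: the blocks produced by Lemma~\ref{thm:pigeon} must genuinely stay within a single component so that the follower sets involved are the \emph{restricted} ones $F(\cdot)$ and not the full follower sets $F_G(\cdot)$, since linkage is defined using the restricted sets. The statement of Lemma~\ref{thm:pigeon} guarantees precisely this (the edges $e_{r+i}$ all lie in $M_e$ and the $f_{r+i}$ all lie in $M_f$), so the argument closes cleanly.
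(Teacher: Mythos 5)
Your proof is correct and follows essentially the same route as the paper: apply Lemma~\ref{thm:pigeon} with a threshold chosen so that any non-linked pair of vertices cannot share a word that long in their restricted follower sets, then read off the common factor $w_{[r,r+t-1]}$ from the two single-component blocks to force $v_r\sim v'_r$. The only cosmetic difference is that the paper bounds the \emph{cardinality} of the finite intersections (taking $m=1+\max|F(w_1)\cap F(w_2)|$ over non-linked pairs) while you bound the maximal \emph{length} of a word in them; these are interchangeable since a shared word of length $m$ forces at least $m$ shared prefixes.
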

\begin{proof}
If $w_1,w_2\in V(G)$ are not linked then there exists a finite $m_{w_1,w_2}$ such
that $|F_K(w_1)\cap F_L(w_2)|=m_{w_1,w_2}$. Take $m=1+\max\{m_{w_1,w_2}:
w_1,w_2\in V(G)$ not linked$\}$. It is then easy to see that any two vertices
$u,v \in V(G)$ are linked iff the intersection of their restricted follower sets 
contains a word of length $m$.

Take $l=(m+1)c^2+c,$ and use Lemma \ref{thm:pigeon} for the two presentations of
$w$. We obtain that there exists $r$ such that $v_r$ and $v'_r$ both contain 
the word $w_{[r,r+m-1]}$ in their restricted follower sets. 
Thus $v_r$ and $v'_r$ must be linked and so $\pi(v_r)=\pi(v'_r)$.
\end{proof}

The components of $G/_\approx$ are partially ordered by the relation ``the component
$K$ can be reached from the component $L$''. Denote this relation by $K\geq L$.
\begin{definition}
We call a sequence $z_0,\dots,z_n$ of vertices of $G$ a \emph{generalised
walk} if there exist edges $e_1,e_2,\dots,e_{n-1}\in E(G)$ such that every
$e_i$ leads from $z_i$ to some $z'_{i+1}\approx z_{i+1}$.
\end{definition}

Notice that every walk in $\Glink$ corresponds to a generalised walk in $G$.
Informally, a generalised walk is a sequence of ordinary walks interleaved by
occasional ``jumps'' to equivalent (under $\approx$) vertices. Crucial to our
proof will be the following lemma describing a way to perform these ``jumps''.

\begin{lemma}\label{thm:walk2chain}
Let $u\approx u'$ be vertices of a labelled graph $G$. 
Let $w$ be a word whose presentation ends in $u$. Then there exists a word
$w'$ whose presentation ends in $u'$ such that $|w|=|w'|$ and 
there exists a chain from $w$ to $w'$.
\end{lemma}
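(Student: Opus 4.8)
The plan is to reduce the statement to a single linking step and then build the chain by splicing two genuine presentations that share a long common block. Since $\approx$ is the transitive and reflexive closure of the (symmetric) relation $\sim$, there is a sequence $u=u_0\sim u_1\sim\cdots\sim u_j=u'$. I would treat one step at a time: given a word ending in $u_i$, produce a word of the same length ending in $u_{i+1}$ together with a chain between them, and then concatenate these chains. Concatenation works because two chains, one from $a$ to $b$ and one from $b$ to $c$ (all of the common length $m=|w|$), can be glued by overlapping them on the block $b$; every factor of length $\le m$ of the glued word is then a factor of one of the two original chains, since the shared block $b$ has length exactly $m$. Reflexive steps $u_i=u_{i+1}$ are handled by the length-zero chain. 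So it suffices to prove the statement when $u\sim u'$.

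Assume $u\sim u'$ and set $m=|w|$. As $u$ and $u'$ are linked, $F(u)\cap F(u')$ is infinite, so by the characterisation of linked vertices it contains a word $z$ with $|z|\ge m$. Since $w$ has a presentation ending in $u$ and $z\in F(u)$, the concatenation $wz$ has a presentation (read $w$ into $u$, then read $z$ out of $u$), so in particular every factor of $wz$ lies in $\el(G)$; this is the left half of the chain. The right half is where the work lies: I want a presentation that reads $z$ and then returns to $u'$. Because $z\in F(u')$ is taken in the \emph{restricted} follower set, $z$ has a presentation lying entirely inside the component $K'$ of $u'$; let it end at a vertex $p\in V(K')$. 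Components are strongly connected, so there is a walk inside $K'$ from $p$ back to $u'$; let $\beta$ be its label. Then $z\beta$ has a presentation starting at $u'$ and ending at $u'$, all of whose factors lie in $\el(G)$.

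Now I set $W=wz\beta$ and let $w'$ be the suffix of $W$ of length $m$. Then $w'$ is presented by the last $m$ edges of the presentation of $z\beta$, which ends in $u'$, so $w'$ has a presentation ending in $u'$ and $|w'|=m=|w|$. It remains to check that $W$ is a chain from $w$ to $w'$: its prefix is $w$, its suffix is $w'$, and every factor of $W$ of length at most $m$ is a factor either of the genuine presentation $wz$ (the windows lying in the first $m+|z|$ letters) or of the genuine presentation $z\beta$ (the windows lying in the last $|z|+|\beta|$ letters). Because the shared block $z$ has length at least $m$, these two ranges cover every length-$m$ window of $W$, so all such factors lie in $\el(G)$, and shorter factors follow since $\el(G)$ is factor-closed.

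The main obstacle is the orientation mismatch: the linking relation only furnishes common \emph{futures} of $u$ and $u'$, whereas the lemma demands a word \emph{arriving} at $u'$. The device that overcomes this is the return walk $\beta$: strong connectivity of the component $K'$ lets me read the common block $z$ out of $u'$ and then loop back to $u'$, converting a statement about followers into the presentation ending in $u'$ that I need. Once this is arranged, the two halves share the block $z$ of length $\ge m$, and the remaining splicing and concatenation bookkeeping is routine.
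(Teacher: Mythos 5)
Your proposal is correct and follows essentially the same route as the paper: reduce to a single linking step (the paper does this by induction on the length of the $\sim$-sequence, you by concatenating chains, which is the same mechanism), then pick a long common word $z\in F(u)\cap F(u')$, present it inside the component of $u'$, and use strong connectivity to return to $u'$, splicing $wz$ and $z\beta$ over the block $z$ (the paper's word $r$ is your $\beta$). You also supply the window-covering verification that the paper leaves as ``easy to verify.''
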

\begin{proof}
As $u\approx u'$, there exists a sequence of linked 
vertices $u=v_0\sim v_1 \sim \dots\sim v_k=u'$. Let us proceed by induction on $k$:
\begin{enumerate}
\item If $k=0$ then we are done, as it suffices to take $w=w'$.

\item Assume that the claim is true when $k<n$ for some $n$. Let us have
$u=v_0\sim \dots\sim v_{n-1}\sim v_{n}=u'$. By the induction hypothesis, there exists a
chain $q$ from $w$ to some $w''$ such that one presentation of $w''$ ends in the
vertex $s=v_{n-1}$.

It is $s\sim u'$, so there exists $z\in F(s)\cap F(u'),
|z|=|w''|$. Because
$z\in F(u')$, there exists a presentation of $z$ beginning in
$u'$ that does not leave the component of $u'$. Call $t$ the ending vertex of
this presentation. There exists a walk from $t$ back to $u'$ which presents
some word $r$ (see Figure \ref{fig:step1}). Consider now
the sequence $w''zr$. It is easy to verify that this is a chain from
$w''$ to $w'=(w''zr)_{[|w''zr|-|w''|+1,|w''zr|]}$ and that $w'$ has a presentation
ending in $u'$. But then $qzr$ is a chain from $w$ to $w'$, ending the proof.
\obrazek{Getting from $w''$ to $w'$.}{step1.eps}{fig:step1}
\end{enumerate}
\end{proof}

\begin{remark}
If $x$ is an infinite word and $p, q$ two presentations of $x$ then the images
$\pi(p)$ and $\pi(q)$ of
$p$ and $q$ under the projection $\pi$ both begin in the same component
$\alpha(x)$ and end in the same component $\omega(x)$ of $G/_\approx$.
\end{remark}
\begin{proof}
Let $K$, $L$ be the components of $G/_\approx$ where $\pi(p), \pi(q)$, respectively,
end. These components are guaranteed to exist because $G/_\approx$ is finite; they are the
maximal (under the ordering $\leq$) components visited by $\pi(p)$ resp. $\pi(q)$.
There exists $n$ such that the intervals $[n,\infty)$ of $\pi(p), \pi(q)$ both stay in
$K, L$. But due to Corollary \ref{thm:intersect} there must exist some
$v\in p_{[n,\infty)}$ and $v'\in q_{[n,\infty)}$ such that $\pi(v)=\pi(v')$ and so $K=L$. Similar argument holds for the beginnings of $p, q$.
\end{proof}
\begin{remark}
\label{thm:long}
Let $G$ be a nonempty labelled graph. Let $H$ be a subgraph of $G/_\approx$ 
and let $K=\pi^{-1}(H)$. Denote by $L$ the subgraph of
$G$ induced by the vertices not in $K$ (see Figure \ref{fig:GHKL}). Then there
exists $m$ such
that no word of length at least $m$ may simultaneously have a presentation
in $K$ and in $L$.
\end{remark}
\obrazek{Example of the situation in
Remark \ref{thm:long} and Lemma \ref{thm:stickycomponent}.}{GHKL_example.eps}{fig:GHKL}
\begin{proof}
Let $m$ be equal to the
constant from Corollary \ref{thm:intersect}. This Corollary tells us that if
some $u$ of length $m$ had a presentation in both $K$ and $L$ then there would
exist two vertices $u\in K,v\in L$ such that $\pi(u)=\pi(v)$. But the whole set
$\pi^{-1}(u)$ must lie either in $K$ or in $L$, a contradiction.
\end{proof}
\begin{lemma}
\label{thm:stickycomponent}
Let $H$ be a terminal subgraph of $\Glink$, let $K,L$ and $m$ be as in Remark \ref{thm:long}.
Let $u$ be a word of length $l\geq 2m+1$ with a presentation in $K$ and let
there lead a chain from $u$ to some $v$. Then $v_{[m+1,l]}$ has a presentation
in $K$.
\end{lemma}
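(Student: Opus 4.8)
The plan is to reduce the assertion to the one-sided claim that the \emph{prefix} $v_{[1,m]}$ is presentable in $K$, and then recover the tail statement by a short dichotomy. The basic structural fact I would use is that, because $H$ is terminal in $G/_\approx$, no edge of $G$ leaves $K=\pi^{-1}(H)$: an edge from a vertex of $K$ would project to an edge leaving $H$. Hence any walk in $G$ that visits $K$ stays in $K$ forever after. In particular, for a presentation $p_0e_1p_1\cdots e_lp_l$ of a word of length $l$, if $p_m\notin K$ then none of $p_0,\dots,p_m$ lies in $K$, so the length-$m$ prefix of the presented word is presentable in $L$.

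Granting $v_{[1,m]}\in\el(K)$, the conclusion is quick. By Remark \ref{thm:long} a word of length $m$ cannot be presentable in both $K$ and $L$, so $v_{[1,m]}$ is not presentable in $L$. Take any presentation $p_0e_1\cdots e_lp_l$ of $v$ (one exists since $v\in\el(G)$). The contrapositive of the previous paragraph forces $p_m\in K$, whence $p_m,\dots,p_l\in K$, and the sub-walk $p_m\cdots p_l$ presents $v_{[m+1,l]}$ entirely inside $K$ --- exactly the claim.

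It remains to show $v_{[1,m]}=w_{[n+1,n+m]}\in\el(K)$, where $w$ is the chain from $u$ to $v$; thus $|w|=n+l$ and, by the definition of a chain with endpoints of length $l$, every factor of $w$ of length at most $l$ lies in $\el(G)$. Call an index $i$ \emph{good} if $w_{[i,i+m-1]}$ is presentable in $K$. The heart of the proof is the derivation: if $i$ is good then $w_{[i+m,i+l-1]}$ is presentable in $K$. Indeed, goodness of $i$ together with Remark \ref{thm:long} says $w_{[i,i+m-1]}$ is not presentable in $L$; applying the first paragraph to any presentation of the length-$l$ window $w_{[i,i+l-1]}$ (a factor of $w$, hence in $\el(G)$) forces its position-$m$ vertex into $K$, so its tail presents $w_{[i+m,i+l-1]}$ in $K$. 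Consequently every length-$m$ sub-factor of this tail is presentable in $K$, i.e. every index in $[i+m,\,i+l-m]$ is good. Since $u=w_{[1,l]}\in\el(K)$, every index in $[1,\,l-m+1]$ is good; using $l\ge 2m+1$ (so that $i-m$ is a legitimate smaller good index whose jump interval $[\,(i-m)+m,\,(i-m)+l-m\,]$ contains $i$), a straightforward strong induction then shows every $i\le n+1$ is good. Taking $i=n+1$ yields $v_{[1,m]}\in\el(K)$.

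The main obstacle is exactly this derivation: goodness of $i$ controls the window's presentation only from position $m$ onward, since the walk may linger in $L$ for up to $m$ steps before being forced into $K$. This $m$-symbol boundary slack is unavoidable and is precisely why the statement concerns $v_{[m+1,l]}$ rather than all of $v$; it is also what dictates the shifted interval $[i+m,\,i+l-m]$ and the step $m$ in the covering induction, whose details are routine once $l\ge 2m+1$ is used.
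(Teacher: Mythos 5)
Your proof is correct and follows essentially the same route as the paper's: both propagate presentability-in-$K$ along the chain by induction, using Remark \ref{thm:long} together with the terminality of $K$ to force any presentation of a word whose length-$m$ prefix is already known to lie in $\el(K)$ to enter $K$ within $m$ edges. The only difference is bookkeeping --- you run a forward strong induction on ``good'' window positions with step $m$, whereas the paper inducts on the chain length by truncating the chain by $l-m$ symbols at a time; the concluding step (splitting $v$ as $v_{[1,m]}v_{[m+1,l]}$ and forcing any presentation of $v$ into $K$ after at most $m$ edges) is identical in both arguments.
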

\begin{proof}
Take the chain $w$ of length $n$ from $u$ to $v$. We prove this lemma by
mathematical induction on $n$.
\begin{enumerate}
\item For $n=0$ the claim is trivial.
\obrazek{Factors of $w$ and $v$ in detail.}{factors.eps}{fig:factors}
\item Let all lengths smaller than $n$ satisfy the condition.  Observe that $w_{[1,n+m]}$ is a chain from $u$ to
$t=w_{[n+m+1-l,n+m]}$ of length $n+m-l<n$ (because $l>m$). Thanks to the induction hypothesis
we know that $t_{[m+1,l]}$ has a
presentation in $K$. It is $v=w_{[n+1,n+l]}$ and so
$v_{[1,m]}=w_{[n+1,n+m]}=t_{[l-m+1,l]}$ 
has a presentation in $K$ (here we use that $l-m+1>m+1$). Denote $v_{[1,m]}$ by $z$. Because $|z|=m$ and $z$
has a presentation in $K$ then $z$ must not have a presentation in $L$.
But then all presentations of $z$ must end in some vertex of
$K$: Any walk in $G$ which enters the terminal subgraph $K$ is not be able to leave
$K$. (Had $z$ a presentation beginning and ending in $L$ then the whole
presentation would lie in $L$.)

Because it is $v=zv_{[m+1,l]}$ we see that any presentation of $v$ must enter
$K$ after at most $m$ edges. Taking any presentation of $v$ in $G$ then gives
us a presentation of $v_{[m+1,l]}$ in $K$.
\end{enumerate}
\end{proof}

Now comes the core theorem of this section that uses all the above results to
describe the relation $\chain$.

\begin{theorem}\label{thm:chain_characterise}
Let $\Sigma$ be a sofic subshift, $G$ a labelled graph,
$\Sigma=\Sigma(G)$. Let $x,y\in\Sigma$. Then $(x,y)\in \chain$ iff $\omega(x)\leq
\alpha(y)$ or $y=\sigma^n(x)$ for some $n>0$.
\end{theorem}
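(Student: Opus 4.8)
The plan is to prove the two directions separately, building everything out of the equivalence between $\epsilon$-chains and chains (Proposition~\ref{thm:chain}) and the two key lemmas about the linking graph. Throughout I would think of a chain from $x$ to $y$ as a generalised walk in $G$: an honest walk interleaved with occasional ``jumps'' between $\approx$-equivalent vertices, where each jump is realised concretely by Lemma~\ref{thm:walk2chain}.

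For the \textbf{converse direction} (the conditions imply $(x,y)\in\chain$), I would handle the two cases. If $y=\sigma^n(x)$ for some $n>0$, then $y$ is literally a shift of $x$, so for any $l$ the word $x_{[-l,\,l]}$ and $y_{[-l,\,l]}=x_{[n-l,\,n+l]}$ are both factors of the single biinfinite word $x$; the relevant stretch of $x$ itself furnishes a chain of length $n>0$ between them, and Proposition~\ref{thm:chain} gives $(x,y)\in\chain$. If instead $\omega(x)\leq\alpha(y)$, fix $l$ and set $u=x_{[-l,\,l]}$, $v=y_{[-l,\,l]}$. The idea is to build a chain from $u$ to $v$ in three stages: first extend a presentation of $u$ forward inside $G$ until it reaches a vertex whose $\pi$-image lies in $\omega(x)$; then use the path $\omega(x)\leq\alpha(y)$ in $G/_\approx$, which corresponds to a generalised walk in $G$, repeatedly invoking Lemma~\ref{thm:walk2chain} to splice chains together across each $\approx$-jump; finally arrive at a vertex whose $\pi$-image lies in $\alpha(y)$ and run forward along a presentation of $y$ to produce the suffix $v$. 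Concatenating these chains (chains compose, since overlapping windows of length $\le m$ stay in $L$) yields a single chain from $u$ to $v$ of nonzero length, and Proposition~\ref{thm:chain} finishes the case.

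For the \textbf{forward direction} ($(x,y)\in\chain$ implies the disjunction), suppose $(x,y)\in\chain$ but $y\neq\sigma^n(x)$ for every $n>0$; I must show $\omega(x)\leq\alpha(y)$. The natural tool is Lemma~\ref{thm:stickycomponent}. Let $H$ be the terminal subgraph of $G/_\approx$ generated by the component $\omega(x)$ together with everything reachable from it, and let $K=\pi^{-1}(H)$, with $m$ the constant from Remark~\ref{thm:long}. For large $l$, the word $u=x_{[-l,\,l]}$ has a presentation ending deep in $\omega(x)$, hence (for the appropriate window) a presentation in $K$; since $(x,y)\in\chain$ there is a chain from $u$ to $v=y_{[-l,\,l]}$, and Lemma~\ref{thm:stickycomponent} tells us $v_{[m+1,\,l]}$ has a presentation in $K$. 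Letting $l\to\infty$ and passing to a limit (using compactness of $\Sigma$), this forces some presentation of $y$ to stay eventually in $K$, so $\alpha(y)$ is reachable from $\omega(x)$ in $G/_\approx$, i.e. $\omega(x)\leq\alpha(y)$.

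The \textbf{main obstacle} I expect is the bookkeeping in the forward direction: Lemma~\ref{thm:stickycomponent} is a statement about finite chains and a fixed length $l$, and I must upgrade it to a statement about the infinite word $y$ and its \emph{beginning} component $\alpha(y)$. The hazard is the loss of the first $m$ letters ($v_{[m+1,\,l]}$ rather than $v_{[1,\,l]}$) in each application, together with the fact that the presentations of $v$ for different $l$ need not be coherent, so a straightforward union argument does not immediately give a single presentation of $y$ inside $K$. I would resolve this by a compactness/König's-lemma argument: the presentations in $K$ of the truncations $y_{[m+1,\,l]}$ form an infinite finitely-branching tree, from which I extract a biinfinite presentation of a shift of $y$ lying in $K$, and then argue that its starting component is $\alpha(y)$ using the Remark that all presentations of an infinite word share the same $\alpha$ and $\omega$. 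The exclusion of the degenerate case $y=\sigma^n(x)$ is exactly what guarantees that this limiting presentation genuinely witnesses reachability rather than collapsing back onto $x$ itself.
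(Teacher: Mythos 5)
Your converse direction (building a generalised walk from $\omega(x)$ to $\alpha(y)$ and converting it to a chain by induction, with Lemma~\ref{thm:walk2chain} handling the jumps) is essentially the paper's argument and is fine. The forward direction, however, has a genuine gap, and it sits exactly where you wrote ``(for the appropriate window)''. The word $u=x_{[-l,l]}$ in general has \emph{no} presentation in $K=\pi^{-1}(H)$: its presentations start near $\alpha(x)$, which need not lie in the terminal subgraph $H$ generated by $\omega(x)$. So to invoke Lemma~\ref{thm:stickycomponent} you must replace $u$ by a forward-shifted window $x_{[h,h+2l]}$ (with $h$ chosen so that presentations of $x$ restricted to $[h,\infty)$ stay in one component). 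But then the hypothesis $(x,y)\in\chain$ only hands you chains from $x_{[-k,k]}$ to $y_{[-k,k]}$; a chain from the \emph{shifted} window $x_{[h,h+2l]}$ to $y_{[-l,l]}$ has to be cut out of such a chain for some larger $k$, and this cut produces a legitimate (nonnegative-length) chain only when the length $n_k$ of the ambient chain exceeds $h+l$. If the minimal chain lengths $n_l$ stay bounded, no such cut exists for large $l$ and your application of Lemma~\ref{thm:stickycomponent} never gets off the ground.

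The missing idea is precisely the dichotomy the paper runs on the nondecreasing sequence $n_l$ of minimal chain lengths: if $\{n_l\}$ is bounded with eventual value $n$, then the prefix $x_{[-l,l]}$ and suffix $y_{[-l,l]}$ of a length-$n$ chain overlap more and more as $l\to\infty$, forcing $\rho(\sigma^n(x),y)<2^{n-l}$ and hence $y=\sigma^n(x)$; if $\{n_l\}$ is unbounded, one can choose $k$ with $n_k>h+l$ and extract the shifted chain needed for Lemma~\ref{thm:stickycomponent}. Your plan assumes $y\neq\sigma^n(x)$ and hopes this ``guarantees that the limiting presentation genuinely witnesses reachability'', but the hypothesis enters the proof only through the contrapositive of the bounded case, and that implication is itself a step that must be proved. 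Your remaining worries (coherence of the presentations of $y_{[m-l,l]}$ for different $l$, resolved by compactness/K\"onig) are real but are handled the same way in the paper via compactness of $\Sigma(K)$; they are not where the difficulty lies.
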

\begin{proof}
Obviously, if $y=\sigma^n(x)$ for $n>0$ then $(x,y)\in\chain$.
Assume now that $\omega(x)\leq\alpha(y)$. Using Proposition
\ref{thm:chain}, it suffices to prove that 
there exists a chain from $w=x_{[-l,l]}$ to $w'=y_{[-l,l]}$ for any $l\in\en$. 

Let $u$ be the
\emph{ending} vertex of some presentation of $w$ and $u'$ be the \emph{starting} vertex of
some presentation of $w'$. As $\omega(x)\leq\alpha(y)$, there exists a generalised walk
$u= z_0,z_1,\dots,z_{n-1},z_n=u'$ in $G$. We want to take this walk and turn it into a
chain. More precisely, we want to find a chain from $w$
to some $w''$ such that one presentation of $w''$ ends in $u'$. As $w''w'$ is
a chain from $w''$ to $w'$, by composing both chains we get a
chain from $w$ to $w'$ of nonzero length (see Figure \ref{fig:generalised}).

To produce the chain from $w$ to $w''$, use mathematical
induction on $n$:
\begin{enumerate}
\item If $n=0$ then the existence of the chain follows
directly from Lemma \ref{thm:walk2chain}.
\item Let the theorem hold for $n-1$. 
An edge (labelled by some letter $a$) leads from $z_{n-1}$ to a vertex $z'\approx z_n$.
Using the induction hypothesis, there exists a
chain from $w$ to some word $v$ whose presentation ends in $z_{n-1}$.
But then we also have the chain $va$ from $v$ to $v'=v_{[2,|v|]}a$.
The word $v'$ has a presentation that ends in $z'$ and, using Lemma
\ref{thm:walk2chain} again, we get that there exists a chain from
$v'$ to some $w''$ whose presentation ends in $z_n=u'$.
\obrazek{Turning a generalised walk to a chain.}{gen_walk.eps}{fig:generalised}
\end{enumerate}

In the other direction, let $(x,y)\in\chain$. Using Proposition \ref{thm:chain}
we get that for every $l>0$ there exists a chain of nonzero length from
$x_{[-l,l]}$ to $y_{[-l,l]}$. Denote by $n_l$ the minimum (nonzero) length of
such a chain. Since a chain for $l$ can be easily obtained from a chain
for $k>l$ by forgetting the prefix and suffix, 
$\{n_l\}_{l=1}^\infty$ is a nondecreasing sequence. We shall distinguish two cases:

\begin{enumerate}
\item Let the sequence be bounded and let $n=\max\{n_l|l\in\en\}$.
We claim that then $y=\sigma^n(x)$. Indeed, for all but finitely many values of
$l$ we have $n_l=n$ and if a chain of length $n$ leads from $x_{[-l,l]}$ to
$y_{[-l,l]}$ then $\rho(\sigma^{n}(x),y)< 2^{n-l}$ (see Figure \ref{fig:proof}).
Thus $\rho(\sigma^n(x),y)=0$ and $y=\sigma^n(x)$. \obrazek{Proving
that $\rho(\sigma^{n}(x),y)<2^{n-l}$.}{picture_proof.eps}{fig:proof}

\item Let the sequence be unbounded. We want to prove
that then $\alpha(y)\geq\omega(x)$.

Let $p$ be a presentation of $x$ in $G$ and
let $h$ be an index such that the walk $p_{[h,\infty]}$ belongs to only one
component $M$ of $G$. Without loss of generality assume that $h>0$.

Take an arbitrary $l>0$. As the sequence $\{n_k\}_{k=1}^\infty$ is not bounded, there
exists $k\geq h+2l$ such that
$n_k>l+h$. The chain from $x_{[-k,k]}$ to $y_{[-k,k]}$ contains as a
factor the chain from $x_{[h,h+2l]}$ to $y_{[-l,l]}$. Here we use that $k\geq
h+2l$ to ensure that $x_{[h,h+2l]}\factor x_{[-k,k]}$ and $n_k>l+h$ to
ensure that the length of the chain is positive. See Figure \ref{fig:subchain}.
\obrazek{The chain from $x_{[-k,k]}$ to $y_{[-k,k]}$ and the chain from
$x_{[h,h+2l]}$ to $y_{[-l,l]}$.}{subchain.eps}{fig:subchain}

For $l$ sufficiently large, we can use Lemma \ref{thm:stickycomponent}
with $u=x_{[h,h+2l]}$, $v=y_{[-l,l]}$ and $H$ equal to the minimal terminal
subgraph of $G/_\approx$ containing $\omega(x)$, obtaining that 
$y_{[m-l,l]}$ has a presentation
in the graph $K=\pi^{-1}(H)$. Because $m$ is a
constant and $\Sigma(K)$ 
is compact we get that the whole word $y$ belongs to $\Sigma(K)$.

Now it remains to observe that the graph $\pi(K)=H$ is precisely the graph of all
vertices reachable from $\omega(x)$. That means that
$\alpha(y)\geq\omega(x)$ as one presentation of $y$ begins in $K$.
\end{enumerate}
%In both cases the implication holds, concluding our proof.
\end{proof}

\begin{corollary}
\label{thm:ch-tr}
Let $L=\el(G)$ be a central regular language, $G$ an essential labelled graph.
Then, $L$ is chain-transitive iff $G/_\approx$ is connected.
\end{corollary}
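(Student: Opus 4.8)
The plan is to read off both implications directly from the explicit description of $\chain$ in Theorem \ref{thm:chain_characterise}, together with the observation recorded earlier that $\Sigma$ is chain-transitive iff $\chain=\Sigma\times\Sigma$. Under that observation, chain-transitivity says precisely that for \emph{every} pair $x,y\in\Sigma$ one has $\omega(x)\leq\alpha(y)$ or $y=\sigma^n(x)$ for some $n>0$. The easy direction is $(\Leftarrow)$: if $G/_\approx$ is connected then, since ``connected'' means strongly connected, $G/_\approx$ is a single component, so $\alpha(y)$ and $\omega(x)$ always equal that one component and $\omega(x)\leq\alpha(y)$ holds trivially for all $x,y$. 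Hence $\chain=\Sigma\times\Sigma$ and $L$ is chain-transitive.

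For $(\Rightarrow)$ I would argue by contraposition: assuming $G/_\approx$ is not strongly connected, I produce $x,y\in\Sigma$ with $(x,y)\notin\chain$. Passing to the condensation of $G/_\approx$ (its components ordered by $\leq$, which forms a DAG), a short argument shows that when this DAG has at least two vertices it must contain a source component $Q$ and a sink (terminal) component $P$ with $P\neq Q$: if the only source coincided with the only sink, that component would be isolated, and then any third component would yield a directed cycle in a DAG, a contradiction. Because $P$ is a sink, nothing but $P$ is reachable from $P$, so $Q$ is not reachable from $P$, i.e.\ $P\not\leq Q$ in the ordering.

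It remains to realise $P$ and $Q$ as the forward- and backward-limit components $\omega(x)$ and $\alpha(y)$ of actual points, and to defeat the escape clause $y=\sigma^n(x)$. First I record the elementary fact that each strongly connected component of $G$ maps under $\pi$ into a single component of $G/_\approx$, since a two-way walk in $G$ projects to a two-way walk in $G/_\approx$. Starting from any vertex of $\pi^{-1}(P)$ and running a forward walk in the essential graph $G$, the walk settles in a terminal component $C$ of $G$, whose image under $\pi$ is reachable from $P$ and therefore equals $P$ since $P$ is a sink. A biinfinite walk staying inside the strongly connected essential component $C$ then presents a point $x\in\Sigma$ with $\alpha(x)=\omega(x)=P$. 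Symmetrically, using an initial component $C'$ of $G$ whose image can reach $Q$ and hence equals $Q$ since $Q$ is a source, I obtain $y\in\Sigma$ with $\alpha(y)=\omega(y)=Q$. Now $\omega(x)=P\not\leq Q=\alpha(y)$, and if we had $y=\sigma^n(x)$ for some $n>0$ then $x$ and $y$ would share a presentation up to a shift, forcing $\alpha(x)=\alpha(y)$ and hence $P=Q$, contrary to $P\neq Q$. By Theorem \ref{thm:chain_characterise}, $(x,y)\notin\chain$, so $\Sigma$ is not chain-transitive.

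The main obstacle I anticipate is exactly the second clause $y=\sigma^n(x)$: merely arranging $\omega(x)\not\leq\alpha(y)$ is not enough, because two shift-equivalent points are always chain-related regardless of their components. The device that removes this obstacle is to make both chosen points ``live'' entirely inside a single component, so that $\alpha(x)=\omega(x)=P$ and $\alpha(y)=\omega(y)=Q$ with $P\neq Q$, whence shift-equivalence is ruled out by the mismatch of initial components. The remaining points are routine: the realisability of a prescribed sink (resp.\ source) of $G/_\approx$ as $\omega$ (resp.\ $\alpha$) rests only on essentiality of $G$ and on the component-to-component behaviour of $\pi$.
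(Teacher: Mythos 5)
Your proof is correct and follows essentially the same route as the paper's: both implications are read off from Theorem~\ref{thm:chain_characterise}, and the converse is obtained by placing $x$ inside (the preimage of) a terminal component and $y$ inside (the preimage of) an initial component of $G/_\approx$, which is exactly the paper's construction. The only cosmetic difference is how the escape clause $y=\sigma^n(x)$ is excluded --- you use the shift-invariance of $\alpha$, while the paper invokes Remark~\ref{thm:long} --- but the substance is the same.
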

\begin{proof}
Using Theorem \ref{thm:chain_characterise}, we see that if $G/_\approx$ is
connected then $L$ must be chain-transitive.

In the other direction, assume by contradiction that $K$ is a terminal
component of $\Glink$, $L$ is an initial component of $G/_\approx$ and $K\neq
L$. The preimages $\pi^{-1}(K)$ resp. $\pi^{-1}(L)$ must contain at least one
terminal resp. initial component of $G$. As $G$ is essential, there must exist
$x\in\Sigma(\pi^{-1}(K))$ and $y\in\Sigma(\pi^{-1}(L))$. Were $y=\sigma^n(x)$
we would get a contradiction with Remark \ref{thm:long} and so, using Theorem
\ref{thm:chain_characterise}, we get $K\leq L$, a contradiction. Thus $K=L$ and
$G$ is connected. \end{proof}
%================================SECTION============================
\section{Chain-Mixing Sofic Subshifts}
We have seen that (for $G$ essential) we can translate the question of
chain-transitivity to a question about the structure of $G/_\approx$. In this
section we characterise the chain-mixing property using the structure of
$\Glink$.

\begin{theorem}\label{thm:ch-mix}
Let $L=\el(G)$ be a central regular language, $G$ essential. Then, $L$ is the language of a chain-mixing
subshift iff $G/_\approx$ is connected and aperiodic.
\end{theorem}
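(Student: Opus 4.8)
The plan is to leverage the characterization from Corollary \ref{thm:ch-tr} together with the correspondence between chains and $\epsilon$-chains, refining the chain-transitivity argument to control the \emph{lengths} of chains. By Corollary \ref{thm:obvious}, $L$ is chain-mixing iff for every $u,v\in L$ of equal length $m$ there exists $k$ such that for all $n>k$ there is a chain of length $n$ from $u$ to $v$. Chain-mixing trivially implies chain-transitivity, so by Corollary \ref{thm:ch-tr} the graph $G/_\approx$ must be connected; the real content is to show that connectedness plus aperiodicity is equivalent to the ability to realize \emph{all sufficiently large} chain lengths.

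\begin{enumerate}
\item First I would establish the forward direction (chain-mixing $\Rightarrow$ connected and aperiodic) by contraposition. If $G/_\approx$ is not connected, it is not even chain-transitive, so it cannot be chain-mixing. If $G/_\approx$ is connected but \emph{periodic}, with vertex classes partitioned into $V_0,\dots,V_{p-1}$ ($p>1$) so that every edge goes from $V_i$ to $V_{i+1}$, then any generalised walk (and hence, via Theorem \ref{thm:chain_characterise} and the chain construction in its proof) respects this cyclic structure modulo $p$. The key point is that the chain lengths realizable between two fixed words are constrained to a single residue class modulo $p$ (up to a bounded correction), so not all large $n$ can occur, contradicting chain-mixing.

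\item For the reverse direction (connected and aperiodic $\Rightarrow$ chain-mixing), I would use the standard fact that a connected aperiodic graph has a bound $N$ such that for all $n\geq N$ there is a closed walk of length $n$ through any fixed vertex (this follows from the aperiodicity giving $\gcd$ of cycle lengths equal to $1$, so by the Chicken McNugget / numerical-semigroup argument all large lengths are attainable). Given $u,v\in L$ of length $m$, chain-transitivity (Corollary \ref{thm:ch-tr}) already yields \emph{some} chain of length $n_0$ from $u$ to $v$. I would then pad this chain to arbitrary larger length by inserting a closed generalised walk of the appropriate length at an intermediate vertex: since the linking graph is connected and aperiodic, I can route through a vertex admitting closed walks of every sufficiently large length, and splice these into the chain via the jump construction of Lemma \ref{thm:walk2chain}. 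This produces chains of every length $n\geq k$ for a suitable $k$.

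\end{enumerate}

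The main obstacle will be the bookkeeping in the padding step: a chain of length $n$ is a \emph{word} of length $n+m$ whose short factors all lie in $L$, so inserting a closed walk requires verifying that the splice points remain valid (all length-$\leq m$ factors stay in $L$) and that the linking-graph periodicity argument transfers cleanly from walks in $\Glink$ to genuine chains in $L$ through the generalised-walk correspondence. Translating the purely combinatorial statement ``closed walks of all large lengths exist'' in the \emph{aperiodic linking graph} back into ``chains of all large lengths exist'' in $L$ is where the generalised-walk machinery and Lemma \ref{thm:walk2chain} do the heavy lifting, and care is needed to ensure the periodicity obstruction in the forward direction exactly matches the aperiodicity hypothesis.
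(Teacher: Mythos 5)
Your forward direction matches the paper's: chain-mixing gives chain-transitivity and hence connectedness via Corollary \ref{thm:ch-tr}, and periodicity of $G/_\approx$ forces all chain lengths between two fixed long words into a single residue class. One detail you gloss over: to pass from \emph{chains} (which are words, not walks) to the partition $V_0,\dots,V_{p-1}$ you need Corollary \ref{thm:intersect}, which guarantees that for $w$ long enough \emph{all} presentations of $w$ end in vertices projecting to the same class $V_i$; the paper then slides a window of length $l$ along the chain to conclude that $p$ divides its length. Citing the construction in the proof of Theorem \ref{thm:chain_characterise} does not help here, since that construction converts walks to chains, which is the wrong direction for this implication.

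The genuine gap is in your reverse direction. You propose to realize every large length $n$ as a closed walk in the connected aperiodic graph $G/_\approx$ and then splice the corresponding closed generalised walk into an existing chain via Lemma \ref{thm:walk2chain}. But that conversion does \emph{not} preserve length: each ``jump'' $v_i\sim v_{i+1}$ inserts a word $zr$ presented by a closed walk of $G$ whose length you do not control, and the number and sizes of these insertions vary from one generalised walk to another. So closed walks of lengths $n$ and $n+1$ in $G/_\approx$ need not yield chains of consecutive lengths, and ``chains of all sufficiently large lengths'' does not follow from ``closed walks of all sufficiently large lengths.'' The paper circumvents exactly this obstacle by arguing modulo the greatest common divisor $d$ of the lengths of chains from $w$ to $w$: if $d>1$, then every closed walk in $G$ must have length divisible by $d$ (otherwise chain-transitivity would already produce a chain of forbidden length), hence the walk-to-chain conversion preserves length mod $d$; aperiodicity of $G/_\approx$ supplies a closed generalised walk of length not divisible by $d$, giving a contradiction. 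Therefore $d=1$, and Lemma \ref{thm:positive_chinese} is applied to actual chain lengths --- which do add exactly under concatenation of chains --- to obtain all sufficiently large lengths, after which composition with a single chain from $w$ to $z$ finishes the proof. Your plan needs either this modular argument or some other device to control the lengths added by the splices; as written, the padding step fails.
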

\begin{proof}
First let us prove the necessity of the conditions. Let $L$ be chain-mixing. Then
$L$ is also chain-transitive and, as follows from Corollary \ref{thm:ch-tr},
$G/_\approx$ must be connected.

Let us now assume that $G/_\approx$ is periodic with a period $n>1$.
Let $w\in \el(G)$ be a word that is long enough
to satisfy the conditions of Corollary \ref{thm:intersect}. We claim that 
then all ending vertices of all presentations of $w$ must be projected to the
same partition $V_i$ of $G/_\approx$.
Let $v_1v_2\dots v_{|w|}$ and $v'_1v'_2\dots v'_{|w|}$ be two walks presenting
$w$. Then Corollary \ref{thm:intersect} tells us that there exists $r$ such that
$v_r\approx v'_r$. But then $\pi(v_r)=\pi(v'_r)\in V_j$ for some $j$ and thus
$\pi(v_{r+1}),\pi(v'_{r+1})\in V_{j+1}$ and $\pi(v_{r+2}),\pi(v'_{r+2})\in
V_{j+2}$ and so on, ending with $\pi(v_{|w|}), \pi (v'_{|w|})\in V_{(j+|w|-r)
\mod n}$.

Take some $w$ such that $|w|=l+1$ where $l$ is the constant from Corollary
\ref{thm:intersect}. Let $z$ be a chain from $w$ to $w$ of length $m$. We claim
that then $n|m$. This will be a contradiction to the chain-mixing property (via
Corollary \ref{thm:obvious}). Without loss of generality, let $V_0$ contain
all the end vertices of all presentations of $w$. Then $z_{[i,i+l]}$ and
$z_{[i+1,i+l+1]}$ share a factor of length $l$ and so, due to the above
remark, if presentations of $z_{[i,i+l]}$ all end in $V_i$ then
presentations of $z_{[i+1,i+l+1]}$ all end in $V_{(i+1) \mod n}$.
But then $V_{m \mod n}=V_0$ and so $n|m$.

In the opposite direction, assume we have a labelled graph $G$ such that
$G/_\approx$ is connected and aperiodic and $L=\el(G)$. We want to show that
$L$ is chain-mixing.  We begin by showing that for any word $w$ there exists a
$k$ such that we can find chains of any length $n>k$ from $w$ to $w$.

Take the least common divisor $d$ of the lengths of all chains from $w$ to
$w$. Notice that all closed walks in $G$ must 
have lengths divisible by $d$, otherwise we could easily produce chains from
$w$ to $w$ of lengths not divisible by $d$ ($L$ is chain-transitive).

Assume that $d>1$. Let $v$ be the ending vertex of one presentation of $w$.
As $G/_\approx$ is aperiodic and connected, there exists a generalised walk in $G$ of some
length $l$ not divisible by $d$ from $v$ back to $v$. Using the same algorithm
as in the first part of proof of Theorem \ref{thm:chain_characterise}, we obtain 
a chain from $w$ to $w$. Each closed walk in $G$ has length divisible
by $d$ and in the proof of Theorem \ref{thm:chain_characterise}, we
have produced the chain from the generalised walk by adding only words presented by
closed walks. This means that the length of our chain gives the same remainder when divided by $d$
as the length $l$ of
the corresponding generalised walk. Thus we get a chain from $w$ to $w$ of length not
divisible by $d$, a contradiction.

If $d=1$ then there must exist chains from $w$ to $w$
of lengths $l_1,l_2,\dots,l_p$ such that the least common divisor of
$l_1,\dots,l_p$ is 1. 

\begin{lemma}\label{thm:positive_chinese}
Let $\{l_1,\dots l_p\}$ be a set of positive integers whose greatest common divisor is 1.
Then there exists $k$ such that every $n>k$ can be written as 
$n=r_1l_1+r_2l_2+\dots+r_pl_p$ where $r_i$ are positive integers or zeroes.
\end{lemma}

\begin{proof}
First notice that for some integers $s_i$ it is
$1=s_1l_1+s_2l_2+\dots+s_pl_p$ because $\zet$ is a principal ideal domain.
Set $m=|s_1|l_1+|s_2|l_2+\dots+|s_p|l_p$.

Let $k=ml_1$. If now $n=q\cdot l_1+t$ where 
$t\in\{0,1,2,\dots, l_1-1\}$ and $q\geq m$ then it is 
\begin{eqnarray*}
n&=& (q-m) l_1+ m l_1 +t(s_1l_1+s_2l_2+\dots+s_pl_p)\\
n&=& (q-m) l_1+ \sum_{i=1}^p (t s_i + |s_i|l_1)l_i\\
\end{eqnarray*}
Letting
$r_1=q-m+ts_1+l_1|s_1|$ and $r_i=ts_i+l_i|s_i|$ for $i=2,3,\dots,p$ we obtain
$r_i\geq 0$ such that $n=\sum_{i=1}^p r_i l_i$.
\end{proof}

Using Lemma \ref{thm:positive_chinese}, we see that for any $n>k$ we can compose the chains from $w$
to $w$ to obtain a chain of length $n$.

Having found chains from $w$ to $w$ of any length $n>k$ we want to find
chains from $w$ to some arbitrary $z$. As $L$ is chain-transitive, there
exists a chain from $w$ to $z$ of length $k'$. By composing this
chain with a suitable chain from $w$ to $w$ of length $n>k$ we can obtain a
chain from $w$ to $z$ of any length $n'=n+k'>k+k'$. As we can do this (with different
$k,k'$) for all $w, z$, the language $L$ must be chain-mixing.
\end{proof}

%================================SECTION============================
\section{Attractors of Sofic Subshifts}

In this section we use the linking graph to characterise all the attractors
of the dynamical system $(\Sigma,\sigma)$ when $\Sigma$ is a sofic subshift. 
There are several slightly different
definitions of attractor. We shall use the following one (from \cite{symbdyn}):
\begin{definition}
Denote by $d(x,Y)$ the distance of the point $x\in X$ from the set $Y\subset
X$ in $X$. An \emph{attractor} $Y$ of a dynamical system $(X,F)$ is a nonempty closed subset of $X$
that satisfies the following:
\begin{enumerate}
\item $F(Y)=Y$
\item $\forall{\varepsilon>0},\,\exists{\delta>0},\, \forall{x\in X},\,
d(x,Y)<\delta\Rightarrow \forall{n>0}, d(F^n(x),Y)<\varepsilon$
\item $\exists{\delta>0},\, \forall{x\in X},\,
d(x,Y)<\delta\Rightarrow \lim\limits_{n\to\infty} d(F^n(x),Y)=0.$
\end{enumerate}
\end{definition}

There are several theorems that show the correspondence between attractors and the
chain relation. We use the following theorem.

\begin{theorem}\label{thm:chain-invariant}\cite[p. 82]{symbdyn}
Let $\Omega$ be an attractor. Then $\Omega$ is chain-invariant, i.e.  $\forall
z\in\Omega, (z,y)\in\chain\Rightarrow y\in\Omega$.
\end{theorem}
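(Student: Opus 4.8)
The plan is to exploit closedness of $\Omega$: it suffices to show $d(y,\Omega)=0$, so I would fix an arbitrary $\epsilon>0$ and prove $d(y,\Omega)\le\epsilon$. The two ingredients I would lean on are the invariance $\sigma(\Omega)=\Omega$ (so the whole forward orbit of $z$ stays in $\Omega$) and, crucially, the stability condition (2) in the definition of an attractor, which says that starting $\delta$-close to $\Omega$ forces \emph{every} forward iterate to stay $\epsilon$-close. The asymptotic condition (3) turns out to be unnecessary: once $d(y,\Omega)\le\epsilon$ holds for every $\epsilon$, closedness finishes the job.

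The heart of the argument is to manufacture a forward-invariant neighbourhood of $\Omega$ with a uniform safety margin. Given $\epsilon$, stability supplies $\delta\in(0,\epsilon)$ such that $d(x,\Omega)<\delta$ implies $d(\sigma^n x,\Omega)<\epsilon$ for all $n\ge0$. I would then consider $U=\{x\in\Sigma:\ d(\sigma^n x,\Omega)<\epsilon\ \text{for all}\ n\ge0\}$. By construction $\sigma(U)\subseteq U$ and $U\subseteq B_\epsilon(\Omega)$; invariance gives $\Omega\subseteq U$ (since $\sigma^n z\in\Omega$); and stability gives $B_\delta(\Omega)\subseteq U$, so $U$ is a genuine neighbourhood of $\Omega$. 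What I really need from $U$ is a number $\gamma>0$ such that the $\gamma$-ball around $\sigma(x)$ is contained in $U$ for every $x\in U$; equivalently, that $\sigma$ maps $U$ well inside $U$, at positive distance from its complement.

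With such a $\gamma$ in hand the chain induction is routine. Using Proposition~\ref{thm:chain} in the form of $\epsilon$-chains, I take a $\gamma$-chain $z=x^0,x^1,\dots,x^N=y$ and show by induction that every $x^i$ lies in $U$: we have $x^0=z\in\Omega\subseteq U$, and if $x^i\in U$ then $\sigma(x^i)\in\sigma(U)\subseteq U$, so $\rho(\sigma(x^i),x^{i+1})<\gamma$ places $x^{i+1}$ in the $\gamma$-margin of $U$, hence again in $U$. Therefore $y=x^N\in U\subseteq B_\epsilon(\Omega)$, giving $d(y,\Omega)\le\epsilon$; letting $\epsilon\to0$ and using that $\Omega$ is closed yields $y\in\Omega$, as required.

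The step I expect to be the main obstacle is precisely extracting the margin $\gamma$, that is, upgrading $U$ to an \emph{open} forward-trapping neighbourhood whose $\sigma$-image is bounded away from $\partial U$. This is delicate because $\sigma$ is not an open map and the forward-saturation of a $\delta$-ball need not be open, so a margin cannot simply be read off from the $\epsilon$--$\delta$ data; a naive step-by-step estimate fails since the one-step errors accumulate (the shift is only bi-Lipschitz with constant $2$, with no contraction to absorb them). Here I would invoke compactness of $\Sigma$ together with uniform continuity of $\sigma$ to replace $U$ by a fundamental (trapping) neighbourhood with $\overline{\sigma(U)}\subseteq U$, from which $\gamma=d(\overline{\sigma(U)},\Sigma\setminus U)>0$ follows by compactness. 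Since the statement is quoted from \cite{symbdyn}, one may alternatively invoke the trapping-neighbourhood theory developed there; a Lyapunov-function argument, using a continuous $V\ge0$ with $V^{-1}(0)=\Omega$ that decreases strictly off $\Omega$, gives the same conclusion.
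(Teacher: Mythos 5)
The paper does not prove this statement at all --- it is quoted from \cite{symbdyn} --- so there is no internal proof to compare yours with and your argument must stand on its own. Your skeleton is the standard and correct shape of the proof: find a forward-invariant neighbourhood $U\subseteq B_\epsilon(\Omega)$ with a uniform margin $\gamma>0$ (meaning $B_\gamma(\sigma(x))\subseteq U$ for all $x\in U$), push a $\gamma$-chain through $U$ by induction, then let $\epsilon\to 0$ and use closedness. The problem is that the one step you defer --- producing the margin $\gamma$ --- is the entire content of the theorem, and your stated plan for obtaining it cannot work, because you have explicitly discarded the one hypothesis that makes it true. You assert that condition (3) of the definition is unnecessary and that stability (condition (2)) together with compactness and uniform continuity suffices. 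This is false as a matter of method: for the identity map on $[0,1]$ the set $\Omega=\{0\}$ satisfies (1) and (2) (take $\delta=\epsilon$), the map is uniformly continuous on a compact space, yet every point of $[0,1]$ is reachable from $0$ by $\gamma$-chains for every $\gamma>0$, so $\Omega$ is not chain-invariant. Hence no combination of compactness and uniform continuity can upgrade your set $U=\{x:\forall n\geq 0,\ d(\sigma^n(x),\Omega)<\epsilon\}$ --- which is forward invariant but whose image $\sigma(U)$ may come arbitrarily close to the complement of $U$ --- to a set with a positive margin, unless condition (3) is used.

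A correct completion must therefore invoke (3). One concrete route: conditions (2), (3) and compactness give \emph{uniform} attraction --- for every $\eta>0$ there is $N$ such that $d(x,\Omega)<\delta_0$ implies $d(\sigma^n(x),\Omega)<\eta$ for all $n\geq N$ (cover a closed $\delta_0$-neighbourhood of $\Omega$ by finitely many open sets on each of which some fixed iterate lands close to $\Omega$, then apply (2) to the subsequent iterates). Fixing $\epsilon$, choose $\delta<\epsilon$ by (2) so that orbits starting in $B_\delta(\Omega)$ stay in $B_{\epsilon/2}(\Omega)$, choose $N$ by uniform attraction so that such orbits re-enter $B_{\delta/2}(\Omega)$ after $N$ steps, and choose $\gamma$ so small that a $\gamma$-chain of length at most $N$ deviates by less than $\delta/2$ from the true orbit of its initial point (uniform continuity of the finitely many maps $\sigma,\dots,\sigma^N$). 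Induction over blocks of length $N$ then keeps the whole chain within $\epsilon$ of $\Omega$: the error accumulated inside one block is absorbed by the uniform return into $B_{\delta/2}(\Omega)$ at the block's end, which is exactly what defeats the error accumulation you worry about. Equivalently one can first build a Lyapunov function that decreases \emph{strictly} off $\Omega$, as in your last sentence --- but such a function cannot exist without (3) either, which contradicts your opening claim that (3) is dispensable.
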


\begin{theorem}
\label{thm:attractors}
Let $G$ be an essential labelled graph. Let $H$ be a nonempty terminal subgraph of
$G/_\approx$. Then the set $\Omega=\Sigma(\pi^{-1}(H))$ is an attractor of
$(\Sigma(G),\sigma)$. Moreover, all attractors of $(\Sigma(G),\sigma)$ are of this type.
\end{theorem}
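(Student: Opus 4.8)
The plan is to verify the three defining properties of an attractor for the ``if'' part, and to use the description of $\chain$ from Theorem \ref{thm:chain_characterise} together with chain-invariance (Theorem \ref{thm:chain-invariant}) for the ``only if'' part. Throughout write $K=\pi^{-1}(H)$. Since $H$ is terminal in $G/_\approx$ and every edge of $G/_\approx$ is induced by an edge of $G$, no edge of $G$ can leave $K$; thus $K$ is a \emph{terminal} subgraph of $G$, so any walk that enters $K$ remains in $K$ forever. Because $G$ is essential and $H\neq\emptyset$, following outgoing edges from any vertex of $K$ produces a closed walk inside $K$, so $\Omega=\Sigma(K)$ is a nonempty subshift; this already gives $\sigma(\Omega)=\Omega$ (condition~1).

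For stability and attraction (conditions~2 and~3) the governing observation is that terminality of $K$ forces the ``entrance'' into $K$ to lie in the past of any window that is certified to belong to $\el(\Omega)$. Fix $\epsilon=2^{-M}$ and let $m$ be the constant of Remark \ref{thm:long}. If $d(x,\Omega)<\delta=2^{-N}$ then $x_{[-N,N]}=w_{[-N,N]}$ for some $w\in\Omega$, so the length-$m$ prefix $x_{[-N,-N+m-1]}$ of the window inherits a presentation in $K$ and hence, by Remark \ref{thm:long}, has none in $L$. Therefore any presentation of $x$ must already have entered $K$ by position $-N+m$ and, being trapped there, its whole forward tail runs in $K$; after at most $|V(K)|$ transient steps this tail settles in a bottom strongly connected component of $K$, every vertex of which lies on a biinfinite walk in $K$. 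Choosing $N$ large compared with $M$, $m$ and $|V(K)|$, I would conclude that for every $n>0$ the block $x_{[n-M,n+M]}$ lies entirely inside such a bottom component, hence belongs to $\el(\Omega)$, giving $d(\sigma^n(x),\Omega)<\epsilon$; letting the tail run then yields $d(\sigma^n(x),\Omega)\to 0$. I expect the fussy point to be exactly this ``for all $n>0$'' bookkeeping: one must rule out the orbit momentarily revisiting the transient entrance region, which is precisely where terminality and the separation constant $m$ do the real work.

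For the converse, let $\Omega$ be an arbitrary attractor and let $H$ be the set of all components of $G/_\approx$ reachable from $\omega(z)$ for some $z\in\Omega$, that is $\{C:\exists z\in\Omega,\ \omega(z)\le C\}$. Being closed under passing to successors, $H$ is terminal. The inclusion $\Sigma(\pi^{-1}(H))\subseteq\Omega$ is then immediate: if $y\in\Sigma(\pi^{-1}(H))$ then $\alpha(y)\in H$, so $\omega(z)\le\alpha(y)$ for some $z\in\Omega$, whence $(z,y)\in\chain$ by Theorem \ref{thm:chain_characterise} and $y\in\Omega$ by chain-invariance (Theorem \ref{thm:chain-invariant}).

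The reverse inclusion $\Omega\subseteq\Sigma(\pi^{-1}(H))$ is where the attractor hypothesis (closedness together with $\sigma(\Omega)=\Omega$) is used, and I regard it as the heart of the argument. Given $z\in\Omega$, its presentation stabilises in the component $\alpha(z)$ as the index tends to $-\infty$, so the backward shifts $\sigma^{-k}(z)$, all of which lie in $\Omega$, have central windows presented entirely inside $\pi^{-1}(\alpha(z))$ once $k$ is large. By compactness a subsequence converges to some $w$, and since $\Omega$ is closed, $w\in\Omega$; moreover $w$ is presented entirely in $\pi^{-1}(\alpha(z))$, so $\omega(w)=\alpha(z)$. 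Hence $\alpha(z)\in H$, and because $H$ is terminal the whole presentation of $z$ is trapped in $K=\pi^{-1}(H)$, giving $z\in\Sigma(\pi^{-1}(H))$. Combining the two inclusions yields $\Omega=\Sigma(\pi^{-1}(H))$, as required.
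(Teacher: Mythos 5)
Your proof follows essentially the same route as the paper's: the forward direction rests on the separation constant $m$ of Remark \ref{thm:long} together with the terminality of $K=\pi^{-1}(H)$ in $G$, and the converse combines an accumulation point of the backward shifts $\sigma^{-k}(z)$ (which lies in $\Omega\cap\Sigma(\pi^{-1}(\alpha(z)))$) with Theorem \ref{thm:chain_characterise} and chain-invariance (Theorem \ref{thm:chain-invariant}). Your reorganisation of the converse --- defining $H$ once and for all as the union of everything reachable from some $\omega(z)$, $z\in\Omega$, instead of taking a union of per-point terminal subgraphs at the end --- is only cosmetic.

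One step is wrong as stated, though repairable. You claim that the forward tail of a presentation, once trapped in $K$, ``settles in a bottom strongly connected component of $K$'' within $|V(K)|$ steps. It need not: a walk may loop inside a non-bottom component of $K$ for arbitrarily long before descending, so no bounded number of steps forces it into a bottom component. What you actually need is weaker and is true: after the walk has spent at least $|V(K)|$ steps inside $K$ it has repeated a vertex and hence traversed a cycle of $K$, so every vertex it visits from then on is reachable from a cycle of $K$ and therefore lies on a biinfinite walk of $K$ (go around the cycle backwards forever, then forward along the walk, then onward using the outgoing edges that essentiality of $G$ plus terminality of $K$ guarantee). That suffices to conclude $x_{[n-M,n+M]}\in\el(\Omega)$, which is the point that matters, since $\el(\Omega)$ consists of factors of biinfinite walks in $K$ and $\pi^{-1}(H)$ need not be essential (a vertex of $K$ may have all its ingoing edges arriving from outside $K$). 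To your credit, you are here patching a gap the paper itself passes over silently when it asserts that $\pi^{-1}(H)$ is essential and that $x_{[-k+n,k+n]}\in\el(\pi^{-1}(H))$; with the cycle-reachability argument in place of the ``bottom component'' claim, your proof is complete.
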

\begin{proof}
We shall first prove that $\Omega$ is indeed an attractor. Obviously, it is a subshift
and so it is shift-invariant and closed. As $\pi^{-1}(H)$ is nonempty and
essential, $\Omega$ is also nonempty.

Because $H$ is a terminal
subgraph of $G/_\approx$, we can utilise Remark \ref{thm:long} and obtain
$m$ such that if $|v|\geq m$ then $v$ may not lie both in the
language of $\pi^{-1}(H)$ and the language of $G\setminus \pi^{-1}(H)$.

Without loss of generality, let $\epsilon=2^{-k}, k\geq 0$. Let $\delta=2^{-m-k}$. If
$d(x,\Omega)<\delta$ then $x_{[-m-k,-k-1]}$ does not belong to the language of
$G\setminus \pi^{-1}(H)$.
Thus all presentations of $x_{[-m-k,-k-1]}$ end in a vertex of
$\pi^{-1}(H)$ and $x_{[-k,\infty)}$ has a presentation in $\pi^{-1}(H)$.
Then $\sigma^n(x)_{[-k,k]}=x_{[-k+n,k+n]}$ belongs to the language
$\el(\pi^{-1}(H))$ and so $d(\sigma^n(x),\Omega)<\epsilon$ for all $n>0$.

Similarly, letting $\delta=2^{-m}$ yields that if $d(x,\Omega)<\delta$ then
$\sigma^n(x)_{[-n,n]}=x_{[0,2n]}$ belongs to $\el(\pi^{-1}(H))$ and so
$d(\sigma^n(x),\Omega)<2^{-n}$ for all $n$. As $n$ tends to infinity then
$d(\sigma^n(x),\Omega)$ tends to zero, concluding the proof that $\Omega$ is an attractor.

Let now $\Omega$ be an attractor of the subshift system. Let $x\in\Omega$ and
take a presentation $p$ of $x$ in $G$. Let $K$ be the component of $G$ where
$p$ starts. Let $H$ be the terminal subgraph of $G/_\approx$ generated by $\pi(K)$ (subgraph
induced by all vertices that can be reached from $\pi(K)$). We claim that
$\Sigma(\pi^{-1}(H))\subseteq \Omega$. 

To prove this claim, we just have to prove that there exists $z\in\Sigma(K)\cap\Omega$ and use
Theorem \ref{thm:chain_characterise} and the chain-invariance of attractors
(Theorem \ref{thm:chain-invariant}). Then $\Sigma(\pi^{-1}(H))$ is 
precisely the set of words of $\Sigma$ that $z$ is in the chain relation with and
so  $\Sigma(\pi^{-1}(H))\subseteq \Omega$.

As $\cantor$ is a compact space and $\Omega$ is a closed subspace of
$\cantor$, $\Omega$ is a compact set. Thus any sequence in $\Omega$ has an
accumulation point. Because $\Omega$ is $\sigma$-invariant it contains all
the words $x_n=\sigma^{-n}(x), n\in\en$. This sequence has an accumulation point
$z\in\Omega$. For any $m>0$ there exists $k>0$ such that $x_{(-\infty,-k+m]}$
has a presentation in $K$ and $z_{[-m,m]}=x_{[-k-m,-k+m]}$, thus $z_{[-m,m]}$ 
has a presentation in $K$ for any $m$. This means that $z\in
\Sigma(K)$. But then $\Omega$ is a union of subshifts of the form
$\Sigma(\pi^{-1}(H_x))$ where $H_x$ are terminal subgraphs of $G/_\approx$ and
$x\in\Omega$.  While it is not in general true that
$\Sigma(G)\cup\Sigma(H)=\Sigma(G\cup H)$, in this special case we can use to
our advantage the fact that all $H_x$ are terminal and so all walks in $H$ are
contained in at least one $H_x$. Thus by taking $H=\bigcup H_x$ we get $\Omega=\Sigma(\pi^{-1}(H))$
where $H$ is a terminal subgraph of $\Glink$.
\end{proof}
%================================SECTION============================
\section{Algorithmic checking of properties}
In this section we describe the algorithms that, given an essential labelled
graph $G$, construct the graph $\Glink$ and check whether $\Sigma(G)$ is
chain-transitive or chain-mixing. As there is presently little need for
practical implementations of such algorithms, we provide only very broad 
descriptions.

\paragraph{The construction of $\Glink$}
The proposed algorithm is quite straightforward: It first finds all pairs of
linked vertices and then joins such pairs of vertices together. We shall 
use the double depth-first search algorithm from \cite[p. 489]{algorithms}
that finds all components of a given graph in time $O(|V|+|E|)$.

\begin{definition}
Given two labelled graphs $G,H$ we can construct their \emph{label product}
$G*H$: a graph with the vertex set $V(G)\times V(H)$ and edge set 
$\{(e,f): e\in E(G),f\in E(H), l(e)=l(f)\}$. That is, an edge leads from
$(u,v)$ to $(u',v')$ iff the edges from $u$ to $u'$ and from $v$ to
$v'$ exist and have the same labels.
\end{definition}

\begin{algorithm} Given $G$ construct $\Glink$.
\begin{enumerate}
\item Find all components $C_1,\dots,C_k$ of $G$.
\item Construct the label products $C_i*C_j$ of all pairs of components.
\item For each $i\leq j$ find all components of $C_i*C_j$ that contain an oriented
cycle. Paint these components red. Let $G_{ij}$ be the subgraph of $C_i*C_j$
induced by the set of vertices 
$$V(G_{ij})=\{v\in V(C_i*C_j)| \hbox{there exists a walk from $v$ to some red component} \}.$$
\item It is $u\sim v$ iff $(u,v)\in V(G_{ij})$ for a suitable $i,j$.
\item Join together all pairs of linked vertices in $G$ to obtain $\Glink$.
\end{enumerate}
\end{algorithm}

To prove the correctness of the algorithm, it suffices to show that 
$(u,v)\in V(G_{ij})$ iff $u\sim v$ for $u\in V(C_i),v\in V(C_j)$.
\begin{proof}
If $u\sim v$ then there exists a word $w$ of length $n=|V(G)|^2+1$ with a
presentation $e_1\dots e_n$ in $C_i$ starting in $u$ and a presentation
$e'_1\dots e'_n$ 
in $C_j$ starting in $v$. Then $(e_1,e'_1)(e_2,e'_2)\dots(e_n,e'_n)$ is a walk in
$C_i*C_j$. Because $C_i*C_j$ has at most $|V(G)|^2$ vertices, this walk must return to an already visited vertex at
least once and that is only possible in a
red component. Thus $(u,v)\in V(G_{ij})$.

On the other hand, if $(u,v) \in V(G_{ij})$ then we can find arbitrarily long
walks that start at $(u,v)$. Let $(e_1,e'_1)\dots (e_n,e'_n)$ be such a walk
of length $n$. Then it is $l(e_i)=l(e'_i)$ for all $i$ and so $e_1\dots e_n$ and $e'_1\dots
e'_n$ are two presentations of the same word starting in $u$ and $v$. We can
do this for any $n$ and so $u\sim v$.
\end{proof}

\paragraph{Checking for chain transitivity}
Using the algorithm from \cite[p. 489]{algorithms} again we can easily check
whether $\Glink$ is connected.

\paragraph{Checking the chain-mixing property}
We need to check that $\Glink$ is connected and find the period of $\Glink$.
This can be done in linear time using breadth-first search as described in \cite{shier}.

\paragraph{Attractors}
It suffices to write down all nonempty terminal subgraphs of $\Glink$. Since
the number of such subgraphs may in general be exponential in the number of 
components of $\Glink$, there can be no fast algorithm. A backtracking
algorithm can be used here.

\paragraph{Complexity of the algorithms}
The first three algorithms run in polynomial time. The creation of $\Glink$
demands the most time while the checks of connectivity and aperiodicity both
run in time $O(|V|+|E|)$.

Unfortunately, the number of terminal subgraphs may be exponential to the size
of the input graph so outputting all the attractors of the shift dynamic system
is in general not very practical for graphs with many components.

%================================SECTION============================
\section{Conclusions}
In this paper, we have introduced and used the notion of linking graph to
better understand sofic subshifts of $\cantor$. It turns out that we
can characterise the relation $\chain$ and all attractors of $\Sigma$ using the
properties of $G/_\approx$.

We have proposed straightforward algorithms to decide 
in polynomial time (to the size of graph $G$ describing $\Sigma$)
whether a sofic subshift $\Sigma$ is chain-mixing or chain-transitive and
an algorithm that, given an essential graph $G$, lists (generally not in
polynomial time) all the attractors of $(\Sigma(G),\sigma)$. 

It is interesting that while chain-transitivity can be decided in polynomial
time, deciding transitivity is co-NP hard, as was
recently shown in \cite{oprocha}. The cause of this contrast seems to be that
when deciding transitivity, we have (explicitely or implicitely) to decide
wheteher for $H,H'$ graphs it is $\Sigma(H)\subseteq\Sigma(H')$, a hard
question, while deciding chain-transitivity requies mereley that we decide if
$\Sigma(H)\cap\Sigma(H')\neq \emptyset$, an easy question.

Linking graphs have proven useful in describing the properties
of $\Sigma$ that depend mainly on $\chain$. Other properties of linking graphs
might be a nice subject for further research.

\bibliographystyle{plain}
\bibliography{citations}
\end{document}